\documentclass[draft,11pt]{article}

\usepackage{amsfonts,amsmath,amsthm,amscd,amssymb,latexsym,cite,verbatim,texdraw,floatflt,caption2,pb-diagram}


\usepackage[mathscr]{eucal}


\newtheorem{theorem}{Theorem}[section]

\newtheorem{remark}{Remark}[section]
\newtheorem{corollary}{Corollary}[section]
\theoremstyle{definition}

\newcommand{\keywords}{\textbf{Keywords: }\medskip}
\newcommand{\subjclass}{\textbf{Mathematics Subject Classification (2010):}\medskip}
\renewcommand{\abstract}{\textbf{Abstract.}\medskip}

\numberwithin{equation}{section}

\setlength{\textwidth}{150mm}
\setlength{\textheight}{206mm}
\setlength{\oddsidemargin}{5mm}
\setlength{\evensidemargin}{5mm}

\sloppy

\begin{document}


\title{\thanks{This work was supported in part  by the Kyrgyz-Turkish Manas
University (Bishkek / Kyrgyz Republic),  project No.~KTM\"{U}-BAP-2019.FBE.02  and the Volkswagen Foundation (VolkswagenStiftung), program ``From Modeling and Analysis to Approximation''.} Widths of  functional classes defined by majorants of generalized moduli of smoothness
in the spaces  ${\mathcal S}^{p}$ }

\author{Fahreddin Abdullayev, Anatolii Serdyuk and Andrii Shidlich}



\date{}

\maketitle

\begin{abstract}

Exact Jackson-type inequalities are obtained  in terms of best approximations and averaged values of generalized moduli
of smoothness in the spaces ${\mathcal S}^p$. The  values of 
Kolmogorov, Bernstein, linear, and projective widths in the spaces ${\mathcal S}^p$ are found for classes of periodic functions defined by certain conditions on the averaged values of the generalized moduli of smoothness.

\end{abstract}
 
\keywords{Kolmogorov width \and Bernstein width \and best approximation \and module of smoothness \and Jackson-type inequality}

\subjclass{\, 41A17	\and 42A32}



\section{Introduction}

Let $ {\mathcal S}^p$,  $ 1\le p<\infty$, (see, for example,  \cite{Stepanets_2001}, \cite[Ch. 11]{Stepanets_M2005}) be the space of   $ 2\pi$-periodic complex-valued Lebesgue summable functions $f$, defined on the real axis $(f\in L)$,
with   finite norm
 \begin{equation}\label{norm_Sp}
 \|f\|_{_{\scriptstyle  {\mathcal S}^p}} :=
 \Big(\sum _{k \in {\mathbb Z}}|\widehat f(k)|^p\Big)^{1/p},
\end{equation}
where
 $\widehat f(k)=\int_{0}^{2\pi }f(x){\mathrm e}^{-{\mathrm i}k x}\frac {{\mathrm d}x}{2\pi} $ are the Fourier coefficients of the
 function $f$.

In the case $p=2$, the spaces ${\mathcal S}^2$ are ordinary Lebesgue spaces $L_2$ of functions
$f\in L$ with finite norm
 \[
 \|f\|_{_{\scriptstyle  L_2}}=\|f\|_{_{\scriptstyle  {\mathcal S}^2}}=\bigg(\frac 1{2\pi}\int\limits_{0}^{2\pi}|f(t)|^2
 {\mathrm d}t\bigg)^{1/2}.
 \]
For arbitrary $ 1\le p<\infty$, these spaces possess some important properties of Hilbert spaces, in particular, the minimal property of Fourier sums, which will be formulated below in the relation (\ref{Best_app_all}).

An active study of the approximative characteristics of the spaces $ {\mathcal S}^p$ originates from the papers
of Stepanets \cite{Stepanets_2001}, \cite[Ch. 11]{Stepanets_M2005}, \cite{Stepanets_2006}, etc.
Stepanets and Serdyuk \cite{Stepanets_Serdyuk_2002} introduced the notion of $k$th modulus of smoothness in ${\mathcal S}^p$
and proved direct and inverse theorems on approximation in terms of these moduli of smoothness  and
the best approximations of  functions.  Also this topic was investigated actively in
\cite{Sterlin_1972}, \cite{Voicexivskij_2002}, \cite{Serdyuk_2003}, \cite{Vakarchuk_2004},
\cite[Ch.~11]{Stepanets_M2005},  \cite{Vakarchuk_Shchitov_2006},  \cite[Ch.~3]{M_Timan_M2009},
\cite{Savchuk_Shidlich_2011}, \cite{Abdullayev_Ozkartepe_Savchuk_Shidlich_2019},  etc.
In the paper,  this research continues. In particular, exact Jackson-type inequalities are obtained  in terms of best approximations and averaged values of generalized moduli of smoothness in the spaces ${\mathcal S}^p$. The   values of Kolmogorov, Bernstein, linear, and projection widths in the spaces   ${\mathcal S}^p$  are found for classes of periodic functions defined by certain conditions on the averaged values of generalized moduli of smoothness.



\section{Preliminaries}

\subsection{Generalized moduli of smoothness and their averaged values}

Let  $\Phi$ be the set of all  continuous bounded non-negative pair functions
$\varphi(t)$  such that  $\varphi(0)=0$ and the Lebesgue measure of the set
$\{t\in {\mathbb R}:\,\varphi(t)=0\}$ is equal to zero.

Developing ideas of the papers \cite{Shapiro_1968}, \cite{Boman_Shapiro_1971}, \cite{Boman_1980},
for a fixed $\varphi\in \Phi$  define the generalized modulus of smoothness
of the function  $f\in {\mathcal S}^p $  by the equality
\begin{equation}\label{general_modulus}
    \omega_\varphi(f,t)_{_{\scriptstyle  {\mathcal S}^p}}
  :=\sup\limits_{|h|\le t} \Big(\sum_{k\in {\mathbb Z}}
\varphi^p(kh) | \widehat{f}(k) |^p\Big)^{1/p},\quad  t\ge 0.
\end{equation}
Let $\omega_\alpha(f,t)_{_{\scriptstyle  {\mathcal S}^p}} $
be the ordinary modulus of smoothness of $f \in {\mathcal S}^p$ of order
$\alpha>0$,  that is,
\begin{equation}\label{usual_modulus}
    \omega_\alpha(f,t)_{_{\scriptstyle  {\mathcal S}^p}} :=
    \sup\limits_{|h|\le t}\|\Delta_h^\alpha f\|_{_{\scriptstyle  {\mathcal S}^p}} =
     \sup\limits_{|h|\le t} \Big\|\sum\limits_{j=0}^\infty (-1)^j {\alpha \choose j} f(\cdot-jh)
     \Big\|_{_{\scriptstyle  {\mathcal S}^p}} ,
\end{equation}
where
${\alpha \choose j}=\frac {\alpha(\alpha-1)\cdot\ldots\cdot(\alpha-j+1)}{j!}$
for
$j \in \mathbb{N}$ and
${\alpha \choose j}=1$ for
$j=0$.

Since for any ${k}\in {\mathbb Z}$, the Fourier coefficients
 \[
|\widehat { \Delta_h^\alpha f }(k)|=
|1-\mathrm{e}^{-\mathrm{i}kh}|^\alpha |\widehat{f}(k)|
=2^\frac \alpha 2 (1-\cos{kh})^\frac \alpha2 |\widehat{f}(k)|,
 \] 
then in view of (\ref{norm_Sp}) and (\ref{general_modulus}), we have
 \[
 \omega_\alpha(f,t)_{_{\scriptstyle  {\mathcal S}^p}} =\sup\limits_{|h|\le t}\Big(\sum_{k\in {\mathbb Z}}
2^\frac {\alpha p} 2 (1-\cos{kh})^\frac {\alpha p}2 |\widehat{f}(k)|^p\Big)^{1/p}=
 \omega_{\varphi_\alpha}(f,\delta)_{_{\scriptstyle  {\mathcal S}^p}},
 \]
where $\varphi_\alpha(t)=2^\frac \alpha 2 (1-\cos{t})^\frac \alpha2$.
In the general case, such modules were considered, in particular, in  \cite{Vasil'ev_2001}, \cite{Kozko_Rozhdestvenskii_2004},
\cite{Vakarchuk_2016}, \cite{Babenko_Konareva_2019}, etc.


Further, let  $M(\tau )$, $\tau>0$, be the set of all functions $\mu$, bounded
non-decreasing and non-constant on the segment   $[0, \tau]$. By $\Omega _\varphi(f, \tau, \mu, u)_{_{\scriptstyle  {\mathcal S}^p}} $, $u>0$,   denote the average value of the generalized modulus of smoothness
$\omega _\varphi$ of the function  $f$ with the weight $\mu \in M(\tau )$, that is,
 \begin{equation}\label{Mean_Value_Gen_Modulus}
  \Omega _\varphi(f, \tau, \mu, u)_{_{\scriptstyle  {\mathcal S}^p}} :=\bigg (\frac
   {1}{\mu  (\tau ) - \mu  (0)}\int 
   _0^u\omega _\varphi^p(f, t)_{_{\scriptstyle  {\mathcal S}^p}} {\mathrm d}\mu  \Big(\frac {\tau
   t}{u}\Big)\bigg)^{1/p}.
 \end{equation}
In particular,  $\Omega _\alpha(f, \tau, \mu, u)_{_{\scriptstyle  {\mathcal S}^p}} $  denotes
the average value of the modulus of smoothness of the order  $\alpha$ of the function   $f$ with the weight $\mu
\in M(\tau )$, that is,
 $
  \Omega _\alpha(f, \tau, \mu, u)_{_{\scriptstyle  {\mathcal S}^p}} :=\Omega _\varphi(f, \tau, \mu, u)_{_{\scriptstyle  {\mathcal S}^p}}
 $   when $\varphi(t)=\varphi_\alpha(t)=
  2^\frac \alpha 2 (1-\cos{t})^\frac \alpha2.
   $%

Note that for arbitrary $f\in {\mathcal S}^p,$ $\tau >0,$ $\mu
\in M(\tau ),$ $u>0$ the functionals $\Omega _\varphi(f, \tau, \mu, u)_{_{\scriptstyle  {\mathcal S}^p}} $
do not exceed the value $\omega _\varphi(f,u)_{_{\scriptstyle  {\mathcal S}^p}},$ and therefore in a number of questions
they can be more effective for characterizing the structural and approximative properties of the function $f$.


\subsection{Definition of $\psi$-derivatives derivatives and functional classes}

Let  $\psi=\{\psi(k)\}_{k=-\infty}^\infty$ be an arbitrary sequence of complex numbers.
If for a given function $f\in L$ with the Fourier series
 $\sum
 _{k\in {\mathbb Z}}\widehat {f}(k){\mathrm e}^{{\mathrm i}k
 x}
 $
 the series
 $
 \sum
 _{k\in {\mathbb Z}}\psi(k)\widehat {f}(k){\mathrm e}^{{\mathrm i}k
 x}
 $
 is the Fourier of  a certain function  $F\in L$, then   $F$ is called  (see, for example, \cite[Ch.~11]{Stepanets_M2005})
 $\psi$-integral of the function  $f$ and is denoted as $F={\cal J}^{\psi }(f, \cdot)$. In turn,
 the function $f$ is called the $\psi$-derivative of the function $F$ and is denoted as  $f=F^{\psi}$.
 In this case, the Fourier coefficients of functions $f$ and $f^{\psi }$ are related by the equalities
\begin{equation} \label{Fourier_Coeff_der}
 \widehat f(k)=\psi (k)\widehat f^{\psi }(k), \quad k \in {\mathbb Z}.
  \end{equation}
The set of  $\psi$-integrals of
functions  $f$ of $L$ is denoted as  $L^{\psi }$. If
 ${\mathfrak {N}}\subset L$, then  $L^{\psi }{\mathfrak {N}}$ denotes the set of $\psi$-integrals of
 functions $f\in {\mathfrak {N}}$. In particular,    $L^{\psi }{\mathcal S}^p$ is the set
 of $\psi$-integrals of    functions $f\in {\mathcal S}^p$.

In the case when  $\psi (k) = ({\mathrm i}k)^{-r}$, $r=0, 1,\ldots$, we denote  $L^{\psi }=:L^{r}$ and
$L^{\psi }{\mathfrak {N}}=:L^{r}{\mathfrak {N}}$.

For arbitrary fixed $\varphi\in \Phi$, $\tau>0$ and $\mu\in  M(\tau)$, define the following functional classes:
  \begin{equation} \label{L^psi(varphi,n)}
  L^{\psi }(\varphi,\tau ,\mu , n)_{_{\scriptstyle  {\mathcal S}^p}}:=
  \Big\{f\in L^{\psi }{\mathcal S}^p:\quad
  \Omega_\varphi\Big(f^{\psi }, \tau, \mu ,\frac{\tau }n\Big)_{_{\scriptstyle  {\mathcal S}^p}} \le 1, \quad n \in
  {\mathbb{N}}\Big\},
  \end{equation}
   \begin{equation} \label{L^psi(varphi,Phi)}
   L^{\psi }(\varphi, \tau, \mu , \Omega )_{_{\scriptstyle  {\mathcal S}^p}}  :=
    \Big\{f\in L^{\psi }{\mathcal S}^p:\
    \Omega _\varphi(f^{\psi }, \tau , \mu ,  u)_{_{\scriptstyle  {\mathcal S}^p}} \le \Omega  (u),\ 0\le u\le \tau \Big\},
  \end{equation}
where $\Omega  (u)$ is a fixed continuous monotonically increasing function of the variable   $u\ge 0$ such that $\Omega  (0)=0$. Also we set  $L^{\psi }(\alpha,\tau ,\mu , n)_{_{\scriptstyle  {\mathcal S}^p}}:=L^{\psi }(\varphi,\tau ,\mu , n)_{_{\scriptstyle  {\mathcal S}^p}}$ and  $L^{\psi }(\alpha,\tau ,\mu,  \Omega )_{_{\scriptstyle  {\mathcal S}^p}}:=
L^{\psi }(\varphi, \tau, \mu , \Omega )_{_{\scriptstyle  {\mathcal S}^p}}$ for  $\varphi(t)=\varphi_\alpha(t)=
2^\frac \alpha 2 (1-\cos{kh})^\frac \alpha2$.

Note that for  $p=2$, $\psi (k)=k^{-r}$, $r\in \mathbb{N}$, and the weight function
$\mu(t)=t$, Taikov  \cite{Taikov_1976}, \cite{Taikov_1979} first considered the
functional classes  similar to the classes $L^{\psi }(\alpha,\tau ,\mu , n)_{_{\scriptstyle  {\mathcal S}^p}}$ and
$L^{\psi }(\alpha,\tau ,\mu,  \Omega )_{_{\scriptstyle  {\mathcal S}^p}}$.  He found
the exact values of the widths of such classes  in the spaces  $L_2$   in the case when the majorants $\Omega$
of the averaged values of  the moduli of smoothness satisfied some constraints. Later, the problem of finding the exact values of the widths in the spaces $L_2$ and ${\mathcal S}^p$ of functional classes of this kind generated by some specific weighting functions $\mu$, was studied in \cite{Aynulloyev_1984}, \cite[Ch. 4]{Pinkus_1985}, \cite{Yussef_1988},  \cite{Yussef_1990}, \cite{Shalaev_1991},
\cite{Esmaganbetov_1999}, \cite{Serdyuk_2003}, \cite{Vakarchuk_2004}, \cite{Vakarchuk_2016}, etc.


\subsection{Best approximations and widths of functional classes}


Let   ${\mathscr T}_{2n+1}$, $n=0,1,\ldots$, be the set of trigonometric polynomials
${T}_{n}(x) = \sum_{|k|\le n}  c_{k}\mathrm{e}^{\mathrm{i}kx}$ of the order  $n$, where $c_{ k}$
are arbitrary complex numbers.

For any function $f\in {\mathcal S}^p$ denote by $E_n (f)_{_{\scriptstyle  {\mathcal S}^p}}$ its best approximation by the trigonometric polynomials   ${T}_{n-1}\in {\mathscr T}_{2n-1}$ in the space  ${\mathcal S}^p$, that is,
 \begin{equation}\label{Best_Approx_Deff}
    E_n (f)_{_{\scriptstyle  {\mathcal S}^p}} :=
    \inf\limits_{{T}_{n-1}\in {\mathscr T}_{2n-1} }\|f-{T}_{n-1}\|_{_{\scriptstyle  {\mathcal S}^p}}
    .
 \end{equation}
From  relation (\ref{norm_Sp}),  it follows (see, for example,
 \cite[Ch.~11, relation (11.4)]{Stepanets_M2005}) that for any function
$f \in {\mathcal S}^p$ and all $n=0,1,\ldots$,
           \begin{equation} \label{Best_app_all}
           E_n^p (f)_{_{\scriptstyle  {\mathcal S}^p}} =\|f-{S}_{n-1}({f})\|_{_{\scriptstyle  {\mathcal S}^p}} ^p=\sum _{|k |\ge  n}|\widehat f(k )|^p,
           \end{equation}
where $S_{n-1}(f)=S_{n-1}(f,\cdot)= \sum _{|k|\le n-1}\widehat{f}(k) {\mathrm{e}^{\mathrm{i}k\cdot}}$ is the partial Fourier sum
of the order  $n-1$ of the function $f$.


Further, let  $K$ be a convex centrally symmetric subset of ${\mathcal S}^p$ and
 let ${ B }$ be a unit ball of the space  ${\mathcal S}^p$. Let also  $F_N$ be an arbitrary $N$-dimensional subspace of space ${\mathcal S}^p$, $N\in {\mathbb N}$, and
$\mathscr{L}({\mathcal S}^p, F_N)$ be a set of linear operators from ${\mathcal S}^p$ to $F_N$.
 By    $\mathscr {P}({\mathcal S}^p, F_N)$ denote the subset of projection operators of the set ${\mathscr{L}}({\mathcal S}^p, F_N)$,  that is, the set of the operators  $A$ of linear projection onto the set
  $F_N$ such that $Af = f$ when $f\in F_N$.  The quantities
 \[
 b_N(K, {\mathcal S}^p)=\sup\limits _{F_{N+1}}\sup\{\varepsilon>0: \varepsilon { B }\cap F_{N+1}
 \subset K\},
 \]
 \[
 d_N(K, {\mathcal S}^p)=\inf\limits _{F_N}\sup \limits _{f\in K}
 \inf \limits _{u\in F_N}\|f - u \|_{_{\scriptstyle  {\mathcal S}^p}} ,
 \]
 \[
 \lambda _N(K,{\mathcal S}^p)=
 \inf \limits _{F_N}\inf \limits_{A\in {\mathscr {L}}({\mathcal S}^p, F_N)}\sup \limits _{f\in K}
 \|f - Af\|_{_{\scriptstyle  {\mathcal S}^p}} ,
 \]
 \[
 \pi _N(K, {\mathcal S}^p)=\inf \limits_{F_N}\inf \limits _{A\in {\mathscr {P}}({\mathcal S}^p,F_N)}
 \sup \limits _{f\in K}\|f - Af\|_{_{\scriptstyle  {\mathcal S}^p}} ,
 \]
 are called Bernstein, Kolmogorov, linear, and projection $N$-widths of the set $K$ in the space ${\mathcal S}^p$, respectively.



\section{Main results}

\subsection{Jackson-type inequalities}

In this subsection,  Jackson-type inequalities are obtained  in terms of best approximations and averaged values of generalized moduli of smoothness in the spaces ${\mathcal S}^p$.

 \begin{theorem}
       \label{Th.6.3.0}
       Assume that $f\in L^{\psi }{\mathcal S}^p$, $1\le p<\infty$,  $\varphi\in \Phi$, $\tau>0$,  $\mu\in  M(\tau)$
       and $\{\psi (k)\}_{k\in {\mathbb Z}}$ is a sequence of complex numbers such that  $|\psi(k)|\le K<\infty$.
        Then for any   $n\in {{\mathbb N}}$ the following inequality is true:
 \begin{equation}\label{Jackson_Type_Ineq}
    E_n(f)_{_{\scriptstyle  {\mathcal S}^p}} \le
    \bigg(\frac {\mu (\tau ) - \mu (0)}
    {I_{n,\varphi,p}(\tau ,\mu )}\bigg)^{1/p} \nu (n)\,
    \Omega _\varphi\Big(f^{\psi}, \tau,\mu , \frac{\tau }n\Big)_{_{\scriptstyle  {\mathcal S}^p}},
 \end{equation}
where  $\nu(n):=\nu(n,\psi)=\sup_{|k|\ge n} |\psi(n)|$,
 \begin{equation}\label{I_n,varphi,p}
      I_{n,\varphi,p}(\tau ,\mu ):= 
      \mathop{\inf\limits _{k\ge n}}\limits_{k \in {\mathbb N}} \int\limits _0^{\tau }\varphi^p\Big(\frac {k t}n\Big)
      {\mathrm d} \mu   (t).
 \end{equation}
If, in addition, the function $\varphi$ is non-decreasing on the interval
$[0,\tau]$,  the quantity
 $\nu(n)=\max\{|\psi(n)|,|\psi(-n)|\}$, and the condition
 \begin{equation}\label{I_n,varphi,p_Equiv_n}
      I_{n,\varphi,p}(\tau ,\mu )=\int\limits _0^{\tau }\varphi^p(t) {\mathrm d} \mu   (t),
 \end{equation}
holds, then  inequality $(\ref{Jackson_Type_Ineq})$ can not be improved and therefore,
 \begin{equation}
       \label{Jackson_Type_Exact}
       \mathop {\sup\limits _{f\in L^{\psi}{\mathcal S}^p}}\limits _{f\not ={\rm const }}
       \frac {E_n(f)_{_{\scriptstyle  {\mathcal S}^p}} }
       {\Omega_\varphi (f^{\psi }, \tau, \mu, \frac{\tau }{n} )_{_{\scriptstyle  {\mathcal S}^p}} }=
       \bigg(\frac {\mu(\tau ) - \mu (0)}{\int_0^{\tau }\varphi^p(t) {\mathrm d} \mu   (t)}\bigg)^{1/p}\nu (n).
 \end{equation}

\end{theorem}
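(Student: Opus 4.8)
\emph{The inequality.} The plan is to translate everything into Fourier coefficients of the $\psi$-derivative. By (\ref{Best_app_all}) one has $E_n^p(f)_{_{\scriptstyle {\mathcal S}^p}}=\sum_{|k|\ge n}|\widehat f(k)|^p$, and by (\ref{Fourier_Coeff_der}), $\widehat f(k)=\psi(k)\widehat{f^{\psi}}(k)$; since $|\psi(k)|\le\nu(n)$ for every $|k|\ge n$, this already gives $E_n^p(f)_{_{\scriptstyle {\mathcal S}^p}}\le\nu^p(n)\sum_{|k|\ge n}|\widehat{f^{\psi}}(k)|^p$. It then remains to bound the tail sum of $f^{\psi}$ by the averaged modulus. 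In (\ref{Mean_Value_Gen_Modulus}) with $u=\tau/n$ one has $\tau t/u=nt$, so the substitution $v=nt$ in the Lebesgue--Stieltjes integral yields
\[
\Omega_\varphi^p\Big(f^{\psi},\tau,\mu,\tfrac\tau n\Big)_{_{\scriptstyle {\mathcal S}^p}}=\frac1{\mu(\tau)-\mu(0)}\int_0^\tau\omega_\varphi^p\Big(f^{\psi},\tfrac vn\Big)_{_{\scriptstyle {\mathcal S}^p}}\,{\mathrm d}\mu(v).
\]

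\emph{Key estimate.} Inside the supremum in (\ref{general_modulus}) I would insert the particular shift $h=v/n$ (admissible, since $|h|\le v/n$), keep only the harmonics with $|k|\ge n$, and use that $\varphi$ is even, getting $\omega_\varphi^p(f^{\psi},v/n)_{_{\scriptstyle {\mathcal S}^p}}\ge\sum_{|k|\ge n}\varphi^p(kv/n)|\widehat{f^{\psi}}(k)|^p$. Integrating term by term (Tonelli, all summands non-negative, $\mu$ a positive measure) and using $\int_0^\tau\varphi^p(kv/n)\,{\mathrm d}\mu(v)\ge I_{n,\varphi,p}(\tau,\mu)$ from (\ref{I_n,varphi,p}) for each such $k$, we obtain $\Omega_\varphi^p(f^{\psi},\tau,\mu,\tau/n)_{_{\scriptstyle {\mathcal S}^p}}\ge \frac{I_{n,\varphi,p}(\tau,\mu)}{\mu(\tau)-\mu(0)}\sum_{|k|\ge n}|\widehat{f^{\psi}}(k)|^p$. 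Combining with the first paragraph and taking $p$-th roots gives (\ref{Jackson_Type_Ineq}).

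\emph{Sharpness.} Under the additional hypotheses, pick $\varepsilon\in\{-1,1\}$ with $|\psi(\varepsilon n)|=\nu(n)=\max\{|\psi(n)|,|\psi(-n)|\}$ and take the test function $f_0:={\mathcal J}^{\psi}({\mathrm e}^{{\mathrm i}\varepsilon n\,\cdot})$, i.e.\ $f_0^{\psi}(x)={\mathrm e}^{{\mathrm i}\varepsilon nx}$, $f_0(x)=\psi(\varepsilon n){\mathrm e}^{{\mathrm i}\varepsilon nx}\in L^{\psi}{\mathcal S}^p$, $f_0\neq{\rm const}$ (the case $\nu(n)=0$ being trivial). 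Then $E_n(f_0)_{_{\scriptstyle {\mathcal S}^p}}=|\psi(\varepsilon n)|=\nu(n)$ by (\ref{Best_app_all}). Because $\varphi$ is even and non-decreasing on $[0,\tau]$, for $t\in[0,\tau/n]$ one has $\omega_\varphi(f_0^{\psi},t)_{_{\scriptstyle {\mathcal S}^p}}=\sup_{|h|\le t}\varphi(nh)=\varphi(nt)$, so after the same change of variables as above and (\ref{I_n,varphi,p_Equiv_n}),
\[
\Omega_\varphi^p\Big(f_0^{\psi},\tau,\mu,\tfrac\tau n\Big)_{_{\scriptstyle {\mathcal S}^p}}=\frac1{\mu(\tau)-\mu(0)}\int_0^\tau\varphi^p(v)\,{\mathrm d}\mu(v)=\frac{I_{n,\varphi,p}(\tau,\mu)}{\mu(\tau)-\mu(0)}.
\]
Hence the ratio in (\ref{Jackson_Type_Exact}) at $f_0$ equals exactly $\big((\mu(\tau)-\mu(0))/I_{n,\varphi,p}(\tau,\mu)\big)^{1/p}\nu(n)$, which with (\ref{Jackson_Type_Ineq}) yields (\ref{Jackson_Type_Exact}).

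\emph{Expected main difficulty.} The proof is a chain of identities and one-line estimates, so there is no deep obstacle; the delicate points are the legitimacy of the change of variables and of term-by-term integration in the Stieltjes integral (routine for monotone $\mu$ and non-negative integrands), the positivity of $I_{n,\varphi,p}(\tau,\mu)$ (which holds since $\varphi$ vanishes only on a null set while $\mu$ is non-decreasing and non-constant), and the conceptual observation in the key estimate that one lower-bounds $\omega_\varphi$ by plugging the \emph{largest} admissible shift $h=v/n$ into the supremum — this is why no monotonicity of $\varphi$ is needed for (\ref{Jackson_Type_Ineq}), whereas in the sharpness part monotonicity is precisely what upgrades that lower bound to an equality.
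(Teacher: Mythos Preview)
Your proof is correct and follows essentially the same route as the paper: the paper reduces to $E_n^p(f)\le\nu^p(n)E_n^p(f^{\psi})$ and then invokes the inequality $E_n^p(g)\le I_{n,\varphi,p}(\tau,\mu)^{-1}\int_0^\tau\omega_\varphi^p(g,t/n)\,{\mathrm d}\mu(t)$ from an external reference, whereas you prove this inequality inline by the same ``plug in $h=v/n$, drop low harmonics, integrate termwise'' argument. For sharpness the paper uses the test function $\gamma+\varepsilon_{-n}\delta\,{\mathrm e}^{-{\mathrm i}nx}+\varepsilon_{n}\delta\,{\mathrm e}^{{\mathrm i}nx}$, which your single-frequency choice $f_0=\psi(\varepsilon n){\mathrm e}^{{\mathrm i}\varepsilon nx}$ is a (slightly cleaner) special case of, leading to the identical computation.
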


\begin{proof} Let $f\in L^{\psi }{\mathcal S}^p$, $1\le p<\infty$. By virtue of  (\ref{Fourier_Coeff_der}) and (\ref{Best_app_all}), we have
 \[
 E_n^p(f)_{_{\scriptstyle  {\mathcal S}^p}} =\sum _{|k|\ge n}|\widehat f(k)|^p\le
 \sum _{|k|\ge n}\Big|\frac {\nu(n)
 }{\psi (k)}\Big|^p|\widehat f(k )|^p
 \]
 \begin{equation}
       \label{A6.91}
       =\nu^{\,p} (n)
       \sum _{|k|\ge n}\Big|\frac {\widehat f(k)}{\psi (k)}\Big|^p=\nu^{\,p} (n)
        E_n^p(f^{\psi })_{_{\scriptstyle  {\mathcal S}^p}} .
   \end{equation}

As shown in  \cite[Proof of Theorem 2]{Abdullayev_Chaichenko_Shidlich_2021}, for any   $g \in {\mathcal S}^p$, $1\le p<\infty$, $\tau >0$, $\varphi\in \Phi$,  $\mu \in M(\tau )$ and   $n\in {\mathbb N}$
 \begin{equation}\label{Jackson_type_OLD}
       E_n^p(g)_{_{\scriptstyle  {\mathcal S}^p}} \le
       \frac 1{I_{n,\varphi,p}(\tau ,\mu )}
       \int \limits _0^{\tau }
       \omega_\varphi^p\Big(g, \frac t{n}\Big)_{_{\scriptstyle  {\mathcal S}^p}} {\mathrm d} \mu  (t).
   \end{equation}
Setting  $g =f^{\psi }$ in (\ref{Jackson_type_OLD}), we get
\[
        E_n^p(f^{\psi})_{_{\scriptstyle  {\mathcal S}^p}} \le
        \frac {\mu (\tau ) - \mu (0)}{I_{n,\varphi,p}(\tau ,\mu )}
        \frac {\int  _0^{\tau }\omega _\varphi^p(f^{\psi }, \frac t{n})_{_{\scriptstyle  {\mathcal S}^p}} {\mathrm d} \mu  (t)}{\mu (\tau ) - \mu (0)}
 \]
  \begin{equation}\label{A6.93}
  \le 
        \frac {\mu (\tau ) - \mu (0)}{I_{n,\varphi,p}(\tau ,\mu )}
        \Omega _\varphi^p\Big(f^{\psi }, \tau, \mu , \frac{\tau }n\Big)_{_{\scriptstyle  {\mathcal S}^p}} .
   \end{equation}
Combining  inequalities (\ref{A6.91}) and (\ref{A6.93}), we obtain   (\ref{Jackson_Type_Ineq}).

Now let the function $\varphi$ be non-decreasing on 
$[0,\tau]$, the condition (\ref{I_n,varphi,p_Equiv_n}) holds and $\nu(n)=\max\{|\psi(n)|,|\psi(-n)|\}$.
Then by virtue of (\ref{Jackson_Type_Ineq}), we have
 \begin{equation}\label{A6.94}
      \mathop {\sup\limits _{f\in L^{\psi}{\mathcal S}^p}}\limits _{f\not ={\rm const }}
       \frac {E_n(f)_{_{\scriptstyle  {\mathcal S}^p}} }
       {\Omega_\varphi (f^{\psi }, \tau, \mu, \frac{\tau }n )_{_{\scriptstyle  {\mathcal S}^p}} }\le
             \bigg(\frac {\mu(\tau ) - \mu (0)}{\int_0^{\tau }\varphi^p(u) {\mathrm d} \mu   (u)}\bigg)^{1/p}\nu(n)
       .
   \end{equation}
 To prove the unimprovability of  inequality (\ref{A6.94}),   consider the function
 \[
  f_n(x)=\gamma  +\varepsilon_{-n}\delta   {\mathrm e}^{-{\mathrm i}nx} +
  \varepsilon_{n} \delta    {\mathrm e}^{{\mathrm i}nx},
  \]
where  $\gamma$ and  $\delta $ are arbitrary complex numbers, and  the quantity $\varepsilon_{k}$, $k\in \{-n,n\}$, is  equal to $1$ when $\nu(n)=|\psi(k)|$ and    $\varepsilon_{k}=0$ when $\nu(n)>|\psi(k)|$.

Since the  function $\varphi(nt)$ is non-decreasing on the interval  $[0, \frac {\tau}n]$,
then by virtue of (\ref{general_modulus}) and (\ref{Fourier_Coeff_der}) we have
 \begin{equation}\label{A6.96}
\omega _\varphi (f_n^\psi, t) =    |\delta  |(\varepsilon_{-n} +\varepsilon_{n})^{1/p}\
 \frac {\varphi
 (nt)}{\nu(n)}.
 \end{equation}
Taking into account (\ref{Mean_Value_Gen_Modulus}),  (\ref{A6.96}) and the equality
 $
    E_n
  (f_n)_{_{\scriptstyle  {\mathcal S}^p}}  =|\delta  |(\varepsilon_{-n} +\varepsilon_{n})^{1/p}
  ,
 $
we see that
  \[
  \mathop {\sup\limits _{f\in L^{\psi }{\mathcal S}^p}}\limits _{f\not ={\rm const}}
  \frac {E_n  (f)_{_{\scriptstyle  {\mathcal S}^p}} }
  {\Omega _\varphi  (f^{\psi }, \tau , \mu , \frac {\tau }n)_{_{\scriptstyle  {\mathcal S}^p}} }
  \ge
  \frac {E_n  (f_n)_{_{\scriptstyle  {\mathcal S}^p}} }
  {\Omega _\varphi  (f_n^{\psi }, \tau, \mu, \frac {\tau }n)_{_{\scriptstyle  {\mathcal S}^p}} }
  \]
  \begin{equation}\label{A6.97}
  = \frac {|\delta  |(\varepsilon_{-n} +\varepsilon_{n})^{1/p} (\mu (\tau )-\mu (0))^{1/p} \nu (n)}
   {
    \Big(\int _0^{\tau /n}|\delta  |^p(\varepsilon_{-n} +\varepsilon_{n})\varphi^p(nt){\mathrm d} \mu  (nt)\Big)^{1/p}
   }=
        \Big(\frac {\mu(\tau ) - \mu (0)}{\int_0^{\tau }\varphi^p(u) {\mathrm d} \mu   (u)}\Big)^{1/p} \nu (n).
 \end{equation}
Relations (\ref{A6.94}) and (\ref{A6.97}) yield (\ref{Jackson_Type_Exact}).

\end{proof}

Combining relations (\ref{A6.91}) and (\ref{Jackson_type_OLD}) with $g=f^{\psi }$,
given that the  modulus $\omega_\varphi(f,t)_{_{\scriptstyle  {\mathcal S}^p}}$  is  non-decreasing for $t\ge 0$, we conclude that the following statement holds:

\begin{corollary} \label{Cor.6.3.0}
Assume that $f\in L^{\psi }{\mathcal S}^p$, $1\le p<\infty$, $\varphi\in \Phi$, $\tau>0$, $\mu\in  M(\tau)$ and  $\{\psi (k)\}_{k\in {\mathbb Z}}$ is a  sequence of complex numbers such that  $|\psi(k)|\le K<\infty$.  Then for any  $n\in {{\mathbb N}}$
 \begin{equation}\label{Jackson_type_OLD_PSI}
       E_n(f)_{_{\scriptstyle  {\mathcal S}^p}} \le \bigg(\frac {\mu (\tau ) - \mu (0)}
    {I_{n,\varphi,p}(\tau ,\mu )}\bigg)^{1/p}\,\nu(n)\, \omega_\varphi\Big(f,\frac{\tau}{n}\Big)_{_{\scriptstyle  {\mathcal S}^p}},
   \end{equation}
where  $\nu(n)=\sup_{|k|\ge n} |\psi(n)|$ and the quantity  $I_{n,\varphi,p}(\tau ,\mu )$ is defened by $(\ref{I_n,varphi,p})$.
\end{corollary}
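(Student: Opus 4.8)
The plan is to read the corollary off the two intermediate inequalities already obtained inside the proof of Theorem~\ref{Th.6.3.0}, replacing the averaged modulus by the ordinary one through a monotonicity argument. The starting point is relation~(\ref{A6.91}), namely $E_n^p(f)_{_{\scriptstyle {\mathcal S}^p}}\le\nu^{\,p}(n)\,E_n^p(f^{\psi})_{_{\scriptstyle {\mathcal S}^p}}$, whose derivation uses only~(\ref{Fourier_Coeff_der}),~(\ref{Best_app_all}) and the definition of $\nu(n)$, and which is meaningful under the standing assumption $|\psi(k)|\le K<\infty$ ensuring $f^{\psi}\in{\mathcal S}^p$. Into the right-hand side I would substitute the Jackson-type estimate~(\ref{Jackson_type_OLD}) of \cite[Proof of Theorem~2]{Abdullayev_Chaichenko_Shidlich_2021}, applied with $g=f^{\psi}$, which yields
\[
E_n^p(f)_{_{\scriptstyle {\mathcal S}^p}}\le\frac{\nu^{\,p}(n)}{I_{n,\varphi,p}(\tau,\mu)}\int_0^{\tau}\omega_\varphi^p\Big(f^{\psi},\frac{t}{n}\Big)_{_{\scriptstyle {\mathcal S}^p}}\,{\mathrm d}\mu(t).
\]

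The one non-routine ingredient is the observation that, for every fixed $g\in{\mathcal S}^p$ and $\varphi\in\Phi$, the function $t\mapsto\omega_\varphi(g,t)_{_{\scriptstyle {\mathcal S}^p}}$ is non-decreasing on $[0,\infty)$; this is immediate from~(\ref{general_modulus}), where $\omega_\varphi(g,t)_{_{\scriptstyle {\mathcal S}^p}}$ is a supremum of a fixed quantity over the nested family of sets $\{h:|h|\le t\}$. Consequently $\omega_\varphi^p(f^{\psi},t/n)_{_{\scriptstyle {\mathcal S}^p}}\le\omega_\varphi^p(f^{\psi},\tau/n)_{_{\scriptstyle {\mathcal S}^p}}$ for all $t\in[0,\tau]$; bounding the integrand by this constant turns the last integral into $\int_0^{\tau}{\mathrm d}\mu(t)=\mu(\tau)-\mu(0)$ (using that $\mu\in M(\tau)$ is non-decreasing), and taking $p$-th roots gives~(\ref{Jackson_type_OLD_PSI}), with the modulus evaluated at the $\psi$-derivative $f^{\psi}$.

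I do not expect any genuine obstacle here: the statement is essentially a bookkeeping corollary of the chain already assembled for Theorem~\ref{Th.6.3.0}, the only conceptual point being the monotonicity of $\omega_\varphi$ in its second variable, which follows at once from the definition. It is worth noting that this monotonicity step is precisely where sharpness is relinquished compared with~(\ref{A6.93}): keeping the integral intact there produces the averaged quantity $\Omega_\varphi(f^{\psi},\tau,\mu,\tau/n)_{_{\scriptstyle {\mathcal S}^p}}$ of Theorem~\ref{Th.6.3.0}, whereas the present corollary settles for the coarser ordinary modulus $\omega_\varphi(f^{\psi},\tau/n)_{_{\scriptstyle {\mathcal S}^p}}$, which is convenient when one wants an estimate not involving an averaging procedure.
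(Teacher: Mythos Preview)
Your proposal is correct and follows exactly the paper's own argument: the paper derives the corollary in one sentence by combining (\ref{A6.91}) with (\ref{Jackson_type_OLD}) for $g=f^{\psi}$ and invoking the monotonicity of $t\mapsto\omega_\varphi(f,t)_{_{\scriptstyle {\mathcal S}^p}}$, which is precisely what you do. Your closing remark that the resulting modulus is $\omega_\varphi(f^{\psi},\tau/n)$ rather than $\omega_\varphi(f,\tau/n)$ is also well taken; the printed statement appears to contain a typo in this respect.
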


In the case when  $p=2$, $\mu _1(t) = 1 - \cos t$, $\tau=\pi$ and the function
$\varphi_1(t)=2^\frac 12 (1-\cos{t})^\frac 12$, that is, when  $\omega_\varphi$ is the ordinary  modulus of smoothness of the order 1, the inequality of the form  (\ref{Jackson_type_OLD_PSI}) was obtained by Stepanets \cite[Ch.~8]{Stepanets_M2005}.
As follows from formula (\ref{A6.102})  below, in this case 
 $
 \Big(\frac {\mu (\tau ) - \mu (0)}
    {I_{n,\varphi,p}(\tau ,\mu )}\Big)^{1/p}=2^{-1/2}.
 $

The function $ \varphi_\alpha(t)=2^\frac \alpha 2 (1-\cos{t})^\frac \alpha2$, $\alpha>0$, is non-decreasing on the interval   $[0,\pi]$.   Therefore, in this case the following statement  holds:


\begin{corollary} \label{Th.6.3.1}
       Assume that $f\in L^{\psi }{\mathcal S}^p$, $1\le p<\infty$,   $\tau>0$, $\mu\in  M(\tau)$ and  $\{\psi (k)\}_{k\in {\mathbb Z}}$ is a  sequence of complex numbers such that  $|\psi(k)|\le K<\infty$. Then for any numbers
       $\alpha>0$ and $n\in {{\mathbb N}}$
     $$
    E_n(f)_{_{\scriptstyle  {\mathcal S}^p}} \le
    \bigg(\frac {\mu (\tau ) - \mu (0)}
    {I_{n,\alpha,p}(\tau ,\mu )}\bigg)^{1/p}\,\nu(n)\,
    \Omega _\varphi\Big(f^{\psi}, \tau,\mu , \frac{\tau }n\Big)_{_{\scriptstyle  {\mathcal S}^p}},
    \eqno (\ref{Jackson_Type_Ineq}')
    $$
where  $\nu(n)=\sup_{|k|\ge n} |\psi(n)|$, the quantity $I_{n,\alpha,p}(\tau ,\mu )$ is defined by
  $(\ref{I_n,varphi,p})$ with  $ \varphi(t)=\varphi_\alpha(t)=
2^\frac \alpha 2 (1-\cos{t})^\frac \alpha2
$.

If, in addition,  $\nu(n)=\max\{|\psi(n)|,|\psi(-n)|\}$  and
   $$
   I_{n,\alpha,p}(\tau ,\mu )=
   2^\frac {\alpha p} 2 \int\limits _0^{\tau }(1-\cos t)^\frac {\alpha p}2 {\mathrm d} \mu   (t),
   \eqno(\ref{I_n,varphi,p_Equiv_n}')
   $$
then for $\tau \in (0, \pi] $ inequality $(\ref{Jackson_Type_Ineq}')$ can not be improved and thus,
  $$
       \mathop {\sup\limits _{f\in L^{\psi}{\mathcal S}^p}}\limits _{f\not ={\rm const }}
       \frac {E_n(f)_{_{\scriptstyle  {\mathcal S}^p}} }
       {\Omega_\alpha (f^{\psi }, \tau, \mu, \frac{\tau }{n} )_{_{\scriptstyle  {\mathcal S}^p}} }=
       \bigg(\frac {\mu(\tau ) - \mu (0)}{2^\frac {\alpha p} 2\int_0^{\tau }(1-\cos t)^{\frac {\alpha p}2}
       {\mathrm d} \mu   (t)}\bigg)^{1/p}\,\nu(n)\,
  $$
  $$
       =
       \bigg(\frac {\mu(\tau ) - \mu (0)}{2^{\alpha p}\int_0^{\tau }\sin^{\alpha p} \frac t2
       {\mathrm d} \mu   (t)}\bigg)^{1/p}\,\nu(n)\,
       .
       \eqno(\ref{Jackson_Type_Exact}')
  $$

\end{corollary}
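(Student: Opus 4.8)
The plan is to derive the corollary directly from Theorem \ref{Th.6.3.0} by specializing $\varphi$ to $\varphi_\alpha(t)=2^{\alpha/2}(1-\cos t)^{\alpha/2}$; the only genuine work is to check that this particular $\varphi$ meets the hypotheses of the theorem on the interval in question. First I would record that $\varphi_\alpha\in\Phi$: it is continuous and even, non-negative, bounded on $\mathbb{R}$ (by $2^{\alpha}$), it satisfies $\varphi_\alpha(0)=0$, and its zero set $\{t:\ 1-\cos t=0\}=2\pi\mathbb{Z}$ has Lebesgue measure zero. Moreover, by definition the quantity $I_{n,\alpha,p}(\tau,\mu)$ is exactly $I_{n,\varphi,p}(\tau,\mu)$ of $(\ref{I_n,varphi,p})$ with $\varphi=\varphi_\alpha$, and $(\ref{I_n,varphi,p_Equiv_n}')$ is $(\ref{I_n,varphi,p_Equiv_n})$ written out for $\varphi_\alpha$, so no reinterpretation is required.

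For the first assertion, the inequality $(\ref{Jackson_Type_Ineq}')$ is nothing but inequality $(\ref{Jackson_Type_Ineq})$ of Theorem \ref{Th.6.3.0} applied with $\varphi=\varphi_\alpha$. Since the first part of Theorem \ref{Th.6.3.0} uses no monotonicity of $\varphi$, this holds for every $\tau>0$, every $\alpha>0$, and every admissible $\psi$ with $\nu(n)=\sup_{|k|\ge n}|\psi(k)|$.

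For the exactness statement I would invoke that $1-\cos t$ is non-decreasing on $[0,\pi]$, hence $\varphi_\alpha$ is non-decreasing on $[0,\tau]$ as soon as $\tau\in(0,\pi]$. Combined with the standing assumptions $\nu(n)=\max\{|\psi(n)|,|\psi(-n)|\}$ and $(\ref{I_n,varphi,p_Equiv_n}')$, all hypotheses of the second part of Theorem \ref{Th.6.3.0} are satisfied, so $(\ref{Jackson_Type_Exact})$ applies with $\varphi=\varphi_\alpha$ and gives
\[
\mathop{\sup\limits_{f\in L^{\psi}{\mathcal S}^p}}\limits_{f\not={\rm const}}
\frac{E_n(f)_{_{\scriptstyle{\mathcal S}^p}}}{\Omega_\alpha(f^{\psi},\tau,\mu,\frac{\tau}{n})_{_{\scriptstyle{\mathcal S}^p}}}
=\bigg(\frac{\mu(\tau)-\mu(0)}{\int_0^{\tau}\varphi_\alpha^p(t)\,{\mathrm d}\mu(t)}\bigg)^{1/p}\nu(n),
\]
where I used that $\Omega_{\varphi_\alpha}=\Omega_\alpha$ by definition. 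Substituting $\varphi_\alpha^p(t)=2^{\alpha p/2}(1-\cos t)^{\alpha p/2}$ yields the first form of $(\ref{Jackson_Type_Exact}')$, and the half-angle identity $1-\cos t=2\sin^2\frac t2$, i.e. $2^{\alpha p/2}(1-\cos t)^{\alpha p/2}=2^{\alpha p}\sin^{\alpha p}\frac t2$, yields the second.

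Since every step is an instantiation of Theorem \ref{Th.6.3.0}, there is no substantial obstacle; the only point requiring care is the restriction $\tau\le\pi$, which is exactly what guarantees that $\varphi_\alpha$ (equivalently $\varphi_\alpha(n\cdot)$ on $[0,\tau/n]$) is non-decreasing, and hence that the extremal trigonometric polynomial $f_n$ built in the proof of Theorem \ref{Th.6.3.0} is admissible here. It should also be noted — though it need not be proved — that $(\ref{I_n,varphi,p_Equiv_n}')$ is a genuine restriction on $\mu$ and $\tau$: for $k>n$ the argument $kt/n$ of $\varphi_\alpha$ may leave $[0,\pi]$, so the infimum over $k\ge n$ in $(\ref{I_n,varphi,p})$ need not be attained at $k=n$ in general.
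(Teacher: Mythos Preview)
Your proposal is correct and follows exactly the paper's approach: the paper simply observes that $\varphi_\alpha(t)=2^{\alpha/2}(1-\cos t)^{\alpha/2}$ is non-decreasing on $[0,\pi]$ and then reads off the corollary from Theorem~\ref{Th.6.3.0}. Your write-up is in fact more detailed than the paper's one-line justification, explicitly verifying $\varphi_\alpha\in\Phi$ and the half-angle rewriting, but the underlying argument is identical.
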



Consider some consequences of this statement for specific weight functions  $\mu _1(t) = 1 - \cos t$ and $\mu _2(t)=t$.

\begin{corollary} \label{Cor.6.3.1}
           Assume that $f\in L^{\psi }{\mathcal S}^p$, $1\le p<\infty$,   and  $\{\psi (k)\}_{k\in {\mathbb Z}}$ is a sequence of complex numbers such that  $|\psi(k)|\le K<\infty$.
           Then for any numbers  $\alpha>0$ and $n\in {{\mathbb N}}$
 \begin{equation}\label{A6.98} 
        E_n(f)_{_{\scriptstyle  {\mathcal S}^p}} \le
               \bigg(\frac {2}
               {I_{n,\alpha,p}
               (\pi , \mu _1)}\bigg)^{1/p}
               \Omega_\alpha\Big(f^{\psi }, \pi, \mu _1, \frac {\pi }n\Big)_{_{\scriptstyle  {\mathcal S}^p}}
               \,\nu(n)\,
               ,
  \end{equation}
where  $\nu(n) =\sup_{|k|\ge n} |\psi(n)|$,
 \begin{equation}\label{A6.99} 
       I_{n,\alpha,p}(\pi ,\mu_1)
       =2^{\frac {\alpha p} 2}
       \mathop{\inf\limits _{k\ge n}}\limits_{k \in {\mathbb N}} \int\limits _0^{\pi}\Big(1 - \cos
      \frac {k t}n\Big)^{\frac {\alpha p }2} \sin t \, {\mathrm d}t
  \end{equation}
If, in addition,   $\nu(n)=\max\{|\psi(n)|,|\psi(-n)|\}$  and the number $\frac {\alpha p }2\in {{\mathbb N}}$,
then inequality $(\ref{A6.98})$ on the set $L^{\psi }{\mathcal S}^p$ can not be improved and
  \begin{equation}\label{A6.100} 
  \mathop {\sup\limits _{f\in L^{\psi }{\mathcal S}^p}}\limits _{f\not = {\rm const}}
  \frac {E_n(f)_{_{\scriptstyle  {\mathcal S}^p}} }{\Omega _\alpha (f^{\psi }, \pi, \mu_1,
\frac {\pi }n)_{_{\scriptstyle  {\mathcal S}^p}} } = \frac {(\frac {\alpha p }2 + 1)^{1/p}}{2^\alpha}
 \,\nu(n)
 .
  \end{equation}

\end{corollary}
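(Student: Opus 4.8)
The plan is to read off this corollary from Corollary~\ref{Th.6.3.1} by specializing its parameters to $\tau=\pi$ and to the weight $\mu=\mu_1$ with $\mu_1(t)=1-\cos t$. Since $\mu_1(\pi)-\mu_1(0)=2$ and ${\mathrm d}\mu_1(t)=\sin t\,{\mathrm d}t$, substituting these into $(\ref{Jackson_Type_Ineq}')$ and into the definition $(\ref{I_n,varphi,p})$ of $I_{n,\alpha,p}$ (taken with $\varphi=\varphi_\alpha$) produces at once inequality $(\ref{A6.98})$ together with the explicit form $(\ref{A6.99})$; this first assertion requires no further work.

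For the sharpness statement $(\ref{A6.100})$, under the extra hypotheses $\nu(n)=\max\{|\psi(n)|,|\psi(-n)|\}$ and $m:=\alpha p/2\in{\mathbb N}$, Corollary~\ref{Th.6.3.1} is applicable (here $\tau=\pi\in(0,\pi]$), so it is enough to verify its hypothesis $(\ref{I_n,varphi,p_Equiv_n}')$ in the present setting. In view of $(\ref{A6.99})$ this amounts to the claim that the infimum defining $I_{n,\alpha,p}(\pi,\mu_1)$ is attained at $k=n$, i.e.
\[ \int_0^\pi\Bigl(1-\cos\tfrac{kt}{n}\Bigr)^{m}\sin t\,{\mathrm d}t\ \ge\ \int_0^\pi(1-\cos t)^{m}\sin t\,{\mathrm d}t=\frac{2^{m+1}}{m+1},\qquad k\ge n,\ k\in{\mathbb N}, \]
the closed form on the right coming from the substitution $u=1-\cos t$. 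Granting this, $I_{n,\alpha,p}(\pi,\mu_1)=2^{m}\cdot 2^{m+1}/(m+1)=2^{\alpha p+1}/(\tfrac{\alpha p}{2}+1)$, and Corollary~\ref{Th.6.3.1} then gives that $(\ref{A6.98})$ cannot be improved, with extremal ratio $\bigl(2/I_{n,\alpha,p}(\pi,\mu_1)\bigr)^{1/p}\nu(n)=\bigl(\tfrac{\alpha p}{2}+1\bigr)^{1/p}2^{-\alpha}\,\nu(n)$, which is exactly $(\ref{A6.100})$.

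Thus everything reduces to the displayed inequality. Writing $c=k/n$, it suffices to prove $F(c)\ge F(1)$ for every rational $c>1$, where $F(c):=\int_0^\pi(1-\cos ct)^{m}\sin t\,{\mathrm d}t$. My approach is a term-by-term Fourier estimate. First expand
\[ (1-\cos\varphi)^{m}=\frac{1}{2^{m}}\binom{2m}{m}+\frac{1}{2^{m-1}}\sum_{l=1}^{m}(-1)^{l}\binom{2m}{m-l}\cos(l\varphi), \]
which is immediate from $1-\cos\varphi=-\tfrac12\bigl({\mathrm e}^{{\mathrm i}\varphi/2}-{\mathrm e}^{-{\mathrm i}\varphi/2}\bigr)^{2}$ and the binomial theorem, and combine it with the elementary evaluation $\int_0^\pi\cos(at)\sin t\,{\mathrm d}t=\frac{1+\cos\pi a}{1-a^{2}}=\frac{2\cos^{2}(\pi a/2)}{1-a^{2}}$, valid for $a\ne1$ (the value at $a=1$ being $0$). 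This gives
\[ F(c)=\frac{2}{2^{m}}\binom{2m}{m}+\frac{4}{2^{m}}\sum_{l=1}^{m}(-1)^{l}\binom{2m}{m-l}\,\frac{\cos^{2}(\pi lc/2)}{1-l^{2}c^{2}}\qquad(c>1), \]
together with the analogous expression for $F(1)$ in which the $l=1$ summand is $0$ (because $\int_0^\pi\cos t\,\sin t\,{\mathrm d}t=0$). It then remains to show that each term of $F(c)-F(1)$ is nonnegative: for $l=1$ one has $-\binom{2m}{m-1}\cos^{2}(\pi c/2)/(1-c^{2})\ge0$ since $1-c^{2}<0$; for odd $l\ge3$ one has $\cos^{2}(\pi l/2)=0$ and again $(-1)^{l}\binom{2m}{m-l}\cos^{2}(\pi lc/2)/(1-l^{2}c^{2})\ge0$ because $1-l^{2}c^{2}<0$; and for even $l$ one has $\cos^{2}(\pi l/2)=1$ and, using $\cos^{2}\le1$ and $1-l^{2}c^{2}\le1-l^{2}<0$ for $c>1$, the chain $\cos^{2}(\pi lc/2)/(1-l^{2}c^{2})\ge1/(1-l^{2}c^{2})\ge1/(1-l^{2})$, whence the even-$l$ term is $\ge0$ as well. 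Summing over $l$ gives $F(c)\ge F(1)$, which is what was needed.

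The only genuinely nontrivial ingredient here is this last inequality; the rest is bookkeeping and elementary integration. I expect the two slightly delicate points to be: (i) keeping the sign $(-1)^{l}$ of the Fourier coefficients of $(1-\cos\varphi)^{m}$ matched with the sign of $\cos^{2}(\pi lc/2)/(1-l^{2}c^{2})$ across the even/odd split of $l$; and (ii) the degenerate value $a=1$ of $\int_0^\pi\cos(at)\sin t\,{\mathrm d}t$ (which occurs only for $l=1$, $c=1$), handled by direct evaluation rather than by the general formula.
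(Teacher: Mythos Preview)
Your proof is correct. The first part---deducing \eqref{A6.98} and \eqref{A6.99} from Corollary~\ref{Th.6.3.1} with $\tau=\pi$, $\mu=\mu_1$---is identical to what the paper does.

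For the sharpness half, the approaches diverge. The paper simply invokes the known formula
\[
\inf_{\theta\ge 1}\int_0^\pi (1-\cos\theta t)^{\lambda}\sin t\,{\mathrm d}t=\frac{2^{\lambda+1}}{\lambda+1},\qquad \lambda\in\mathbb N,
\]
citing \cite[relation (52)]{Stepanets_Serdyuk_2002}, and reads off condition $(\ref{I_n,varphi,p_Equiv_n}')$ from it. You instead give a self-contained proof of this inequality: expand $(1-\cos\varphi)^{m}$ into its finite cosine series, integrate each term against $\sin t$ using the closed form $\int_0^\pi\cos(at)\sin t\,{\mathrm d}t=2\cos^2(\pi a/2)/(1-a^2)$, and verify $F(c)\ge F(1)$ term by term via a sign analysis on the even/odd split of $l$. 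The argument is clean and actually establishes the inequality for all real $c>1$, not just rationals $k/n$. What your approach buys is independence from the external reference; what the paper's citation buys is brevity. Both routes land on the same value $I_{n,\alpha,p}(\pi,\mu_1)=2^{\alpha p+1}/(\tfrac{\alpha p}{2}+1)$ and hence on \eqref{A6.100}.
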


\begin{proof}

Indeed, if $\tau=\pi$ and $\mu (t)= 1 - \cos t$ in Corollary  \ref{Th.6.3.1}, then relation  (\ref{A6.98})
follows from inequality  $(\ref{Jackson_Type_Ineq}')$. If  the number $\frac {\alpha p}2$
is positive integer, then use the following formula (see \cite[relation (52)]{Stepanets_Serdyuk_2002}):
  \begin{equation}\label{A6.101}
        \inf \limits_{\theta \ge 1}\int \limits _0^{\pi }(1 - \cos \theta t)^{\lambda}
        \sin t {\mathrm d}t = \frac {2^{\lambda +1}}{\lambda +1},\quad   \lambda \in {{\mathbb N}}.
  \end{equation}
Setting $\lambda = \frac{\alpha p}2$ and  $\theta = \frac {k}n$,  $k=n, n+1, n+2, \ldots, $ we see that  $\theta\ge 1$. Therefore,
  \begin{equation}\label{A6.102}
 \mathop{\inf\limits _{k\ge n}}\limits_{k \in {\mathbb N}} \int\limits _0^{\tau }\Big(1 - \cos
      \frac {k t}n\Big)^{\frac {\alpha p }2} \sin t \, {\mathrm d}t=\int\limits _0^{\tau }\Big(1 - \cos
      t\Big)^{\frac {\alpha p }2} \sin t \, {\mathrm d}t= \frac {2^{\frac {\alpha p }2+1}}{\frac {\alpha p }2+1},
  \end{equation}
and equality  (\ref{A6.100}) follows from relation  $(\ref{Jackson_Type_Exact}')$ of Corollary  \ref{Th.6.3.1}
with $\tau =\pi$ and  $\mu (t)=1 - \cos t$.

\end{proof}

\begin{remark}
 In case $p=2$ and $\psi (k) = (ik)^{-r}$, $r=0, 1,\ldots$,  equality  $(\ref{A6.100})$
  can be given in the form
  $$
         \mathop {\sup\limits _{f\in L^r {\mathcal S}^2}}\limits _{f\not ={\rm const}}
         \frac{E_n(f)_{_{\scriptstyle  {\mathcal S}^2}} }
         {\Omega _\alpha (f^{(r)}, \pi, \mu _1, \frac {\pi }n)_{_{\scriptstyle  {\mathcal S}^2}}}
         =\frac{\sqrt {\alpha+1}}{2^\alpha}n^{-r}, \quad \alpha>0, \ n\in {\mathbb N}.
         \eqno (\ref{A6.100}')
 $$
 For $\alpha=1$ this relation follows from the result of Chernykh  \cite{Chernyx_1967_2}.
 For arbitrary  $\alpha=k\in {{\mathbb N}}$ and $n\in {\mathbb N}$, the exact values of the quantities on the
 left-hand side of   $(\ref{A6.100}')$ were obtained by  Yussef \cite{Yussef_1988}  in a slightly different form.
 \end{remark}

\begin{corollary} \label{Cor.6.3.2.}  Let   $0<\tau \le \frac {3\pi }4$, $\mu _2(t)=t$,
  the numbers   $1\le p<\infty$ and  $\alpha>0$ be such that  $\alpha p\ge 1$.
  Let  also $n\in {{\mathbb N}}$   and   $\psi \in \Psi $ be the sequence such that
   $\nu(n)=\sup_{|k|\ge n} |\psi(n)| =\max\{|\psi(n)|,|\psi(-n)|\}$. Then
    \begin{equation}\label{A6.103}
    \mathop{\mathop {\sup}\limits _{f\in L^{\psi }{\mathcal S}^p}}\limits_{f\not ={\rm const }}
           \frac {E_n(f)_{_{\scriptstyle  {\mathcal S}^p}} }
                 {\Omega _\alpha(f^{\psi },\tau , \mu_2, \frac {\tau }n)_{_{\scriptstyle  {\mathcal S}^p}} }
                 = \bigg(\frac {\tau}{2^{\alpha p}\int _0^{\tau }\sin ^{\alpha p }\frac t{2}{\mathrm d}t}\bigg)^{1/p}
                 \nu(n)
                 .
  \end{equation}

\end{corollary}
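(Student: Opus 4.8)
The plan is to obtain (\ref{A6.103}) as the special case $\mu=\mu_2$, $\mu_2(t)=t$, of Corollary \ref{Th.6.3.1}. All the structural hypotheses of that corollary hold here: $\varphi_\alpha$ is non-decreasing on $[0,\pi]$, $\nu(n)=\max\{|\psi(n)|,|\psi(-n)|\}$ by assumption, $\tau\in(0,\pi]$ because $\tau\le3\pi/4$, and $|\psi(k)|\le\nu(n)<\infty$ for $|k|\ge n$. Since $\mathrm{d}\mu_2(t)=\mathrm{d}t$ and $\mu_2(\tau)-\mu_2(0)=\tau$, the right-hand side of $(\ref{Jackson_Type_Exact}')$ is precisely that of (\ref{A6.103}); so everything reduces to verifying condition $(\ref{I_n,varphi,p_Equiv_n}')$, which for $\mu=\mu_2$ reads
\[
\inf_{k\ge n,\ k\in{\mathbb N}}\int_0^\tau\Big(1-\cos\tfrac{kt}{n}\Big)^{\lambda}\mathrm{d}t=\int_0^\tau(1-\cos t)^{\lambda}\,\mathrm{d}t,\qquad\lambda:=\tfrac{\alpha p}{2}\ge\tfrac12 .
\]
As $k/n\ge1$, it suffices to prove the continuous version $\int_0^\tau(1-\cos\theta t)^\lambda\mathrm{d}t\ge\int_0^\tau(1-\cos t)^\lambda\mathrm{d}t$ for every real $\theta\ge1$. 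With the substitution $s=\theta t$ and the notation $G(x):=\int_0^x(1-\cos s)^\lambda\mathrm{d}s$, $g(x):=G(x)/x$, this becomes exactly the assertion that $g$ attains its infimum on $[\tau,\infty)$ at the point $\tau$.

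I would analyse $g$ through $g'(x)=\rho(x)/x^2$ with $\rho(x):=x(1-\cos x)^\lambda-G(x)$, for which $\rho(0)=0$ and $\rho'(x)=\lambda x(1-\cos x)^{\lambda-1}\sin x$ has the sign of $\sin x$. So $\rho$ is monotone on each interval $(m\pi,(m+1)\pi)$; and since $\rho(k\pi)=-k\pi\bar c<0$ for even $k\ge2$ and $\rho(k\pi)=k\pi(2^\lambda-\bar c)>0$ for odd $k$, where $\bar c:=\tfrac1\pi\int_0^\pi(1-\cos s)^\lambda\mathrm{d}s<2^\lambda$, $\rho$ has a unique zero $x_m\in(m\pi,(m+1)\pi)$ for each $m\ge1$, is positive on $(0,x_1)$ and on each $(x_{2j},x_{2j+1})$, and negative on each $(x_{2j-1},x_{2j})$. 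Consequently $g$ is increasing on $[\tau,x_1]$ — here the bound $\tau\le3\pi/4<\pi<x_1$ is used — so $g\ge g(\tau)$ there, while on $[x_1,\infty)$ the local minima of $g$ are exactly the points $x_{2j}$, $j\ge1$, whence $g\ge\inf_{j\ge1}g(x_{2j})$ on $[x_1,\infty)$; moreover $\rho(x_{2j})=0$ gives $g(x_{2j})=(1-\cos x_{2j})^\lambda$. It thus remains to show $g(\tau)\le g(x_{2j})$ for every $j\ge1$, and since $g(\tau)\le g(3\pi/4)$ it is enough to prove $g(3\pi/4)\le g(x_{2j})$.

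For this, put $P(x):=G(x)-\bar c x$. Then $P$ is $2\pi$-periodic with $P(0)=P(\pi)=0$, $P<0$ on $(0,\pi)$, $P$ convex on $[0,\pi]$ (since $P''=\lambda(1-\cos x)^{\lambda-1}\sin x\ge0$ there), and $P$ attains its global minimum at the point $s^\ast\in(0,\pi)$ determined by $(1-\cos s^\ast)^\lambda=\bar c$. Writing $\xi_j:=x_{2j}-2j\pi\in(0,\pi)$ and using $g(x)=\bar c+P(x)/x$ together with the periodicity of $P$, one gets $g(x_{2j})=\bar c-|P(\xi_j)|/(2j\pi+\xi_j)\ge\bar c-|P(s^\ast)|/(2\pi)$ uniformly in $j\ge1$, whereas $g(3\pi/4)=\bar c-|P(3\pi/4)|/(3\pi/4)$; so it suffices to establish the numerical inequality $8\,|P(3\pi/4)|\ge3\,|P(s^\ast)|$. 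By convexity of $P$ on $[0,\pi]$, $|P(3\pi/4)|$ is bounded below by the relevant chord value — by $\tfrac{\pi/4}{\pi-s^\ast}|P(s^\ast)|$ if $s^\ast\le3\pi/4$ and by $\tfrac{3\pi/4}{s^\ast}|P(s^\ast)|\ge\tfrac34|P(s^\ast)|$ if $s^\ast>3\pi/4$. In the first case one needs $s^\ast\ge\pi/3$, i.e. $\bar c=(1-\cos s^\ast)^\lambda\ge2^{-\lambda}$, which follows from $\bar c\ge\tfrac1\pi\int_0^\pi(2s^2/\pi^2)^\lambda\mathrm{d}s=\tfrac{2^\lambda}{2\lambda+1}\ge2^{-\lambda}$, the last step being the inequality $4^\lambda\ge2\lambda+1$, valid precisely because $\lambda=\tfrac{\alpha p}{2}\ge\tfrac12$. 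Hence $g(3\pi/4)\le\bar c-|P(s^\ast)|/(2\pi)\le g(x_{2j})$, so $(\ref{I_n,varphi,p_Equiv_n}')$ holds and (\ref{A6.103}) follows from Corollary \ref{Th.6.3.1}.

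The main obstacle is this last quantitative comparison. The running average $g$ does genuinely dip below $\bar c$ after each full period, so its infimum over $[\tau,\infty)$ is not attained at $\tau$ for arbitrary $\tau<\pi$; one must locate the first dip $x_2$ and the minimiser $s^\ast$ of the oscillation $P$ sharply enough to beat $g(3\pi/4)$. The constant $3\pi/4$, together with the hypothesis $\alpha p\ge1$, is exactly what lets the convexity estimate and the elementary bound $4^\lambda\ge2\lambda+1$ close the argument uniformly in $\lambda\ge\tfrac12$.
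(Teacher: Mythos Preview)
Your strategy is identical to the paper's: reduce everything to Corollary~\ref{Th.6.3.1} by verifying condition $(\ref{I_n,varphi,p_Equiv_n}')$ for $\mu=\mu_2$, and observe that with $\mu_2(\tau)-\mu_2(0)=\tau$ the right-hand side of $(\ref{Jackson_Type_Exact}')$ is exactly (\ref{A6.103}). The paper disposes of $(\ref{I_n,varphi,p_Equiv_n}')$ in one line by citing \cite{Voicexivskij_2002} for the identity
\[
\inf_{k\ge n,\ k\in\mathbb N}\int_0^\tau\Big|\sin\frac{kt}{2n}\Big|^{\gamma}\,{\rm d}t=\int_0^\tau\sin^{\gamma}\frac t2\,{\rm d}t,\qquad 0<\tau\le\tfrac{3\pi}{4},\ \gamma\ge 1,
\]
which with $\gamma=\alpha p$ is exactly your claim that $g(x)=G(x)/x$ attains its infimum on $[\tau,\infty)$ at $\tau$. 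You instead give a self-contained proof of this fact via the sign analysis of $\rho(x)=x(1-\cos x)^\lambda-G(x)$, the $2\pi$-periodic oscillation $P(x)=G(x)-\bar c\,x$, and the convexity/chord estimate that yields $8|P(3\pi/4)|\ge 3|P(s^\ast)|$ once $s^\ast\ge\pi/3$; the last step, $\bar c\ge 2^{-\lambda}$, follows from $1-\cos s\ge 2s^2/\pi^2$ and $4^\lambda\ge 2\lambda+1$ for $\lambda\ge\tfrac12$. This argument is correct and makes transparent exactly where both the cutoff $\tau\le 3\pi/4$ and the hypothesis $\alpha p\ge 1$ are used. The trade-off is plain: the paper's proof is two lines but relies on an external reference, while yours is longer but fully self-contained and actually explains the role of the constants in the statement.
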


\begin{proof} As shown in \cite{Voicexivskij_2002}, for arbitrary numbers
 $\tau \in (0, \frac {3\pi }4]$ and $\gamma\ge 1$
  \[
          \mathop {\inf \limits _{k\ge n}}\limits _{k \in {{\mathbb N}}}
          \int \limits_0^{\tau }\Big|\sin \frac {\nu t}{2n}\Big|^{\gamma} {\mathrm d}t=
          \int \limits_0^{\tau } \sin ^{\gamma } \frac t{2} {\mathrm d}t.
  \]
Therefore, for   $\gamma=\alpha p$  and  $\tau \in (0, \frac {3\pi }4]$, we have
 \[
       I_{n,\alpha,p}(\tau ,\mu_2 )=
                    2^{\frac {\alpha p} 2} \mathop {\inf\limits _{k \ge n}}\limits _{k \in {\mathbb N}}
                    \int\limits _0^{\tau }\Big(1 - \cos\frac {k t}n\Big)^{\frac {\alpha p }2} {\mathrm d}t
                    = 2^{\alpha p} \mathop {\inf\limits _{k \ge n}}\limits _{k \in {\mathbb N}}
                    \int\limits _0^{\tau }\Big|\sin \frac {k t}{2n}\Big|^{\alpha p}   {\mathrm d}t
 \]
 \[ 
                    = 2^{\alpha p} \int\limits _0^{\tau }\Big|\sin \frac {t}{2}\Big|^{\alpha p}   {\mathrm d}t
                    = 2^{\frac {\alpha p} 2} \int\limits _0^{\tau }
                      (1 - \cos t )^{\frac {\alpha p }2} {\mathrm d}t.
 \]
Thus,  relation  (\ref{A6.103}) follows from equality  $(\ref{Jackson_Type_Exact}')$ of Corollary    \ref{Th.6.3.1}
with $\mu (t)=t$ and $\tau \in (0,\frac {3\pi }4]$.

\end{proof}

Note that in the case where $p=2$, $\psi (k)=({\mathrm i}k)^{-r},$ $r\ge 0$ and
 $k=1$ or $r\ge 1/2$ and $k \in {{\mathbb N}}$, equality (\ref{A6.103}) follows from the results of Taikov
 \cite{Taikov_1976}, \cite{Taikov_1979},
 (see also \cite{Ligun_1978}).


\subsection{Widths of the classes $L^{\psi }(\varphi, \mu, \tau, n)_{_{\scriptstyle  {\mathcal S}^p}} $}

In this subsection,  the values of Kolmogorov, Bernstein, linear, and projection widths are found for the
the classes $L^{\psi}(\varphi, \mu, \tau, n)_{_{\scriptstyle  {\mathcal S}^p}} $ in the case when the sequences
$\psi (k)$ satisfy some natural restrictions. To state these results,  denote by $\Psi$ the set of arbitrary
sequences $\{\psi (k)\}_{k\in {\mathbb Z}}$  of complex numbers such that  $|\psi (k)|=|\psi (-k)| \ge |\psi (k+1)|$ for $k\in {\mathbb N}$.

\begin{theorem} \label{Th.6.3.2}
       Assume that  $1\le p<\infty,$ $\psi \in \Psi$, $\tau>0$, the function
       $\varphi\in \Phi$ is non-decreasing on the interval $[0,\tau]$ and  $\mu \in M(\tau )$.
       Then for any  $n\in {\mathbb N}$ and  $N\in \{2n-1,  2n\}$  the following inequalities are true:
\[
   \bigg(\frac{\mu (\tau ) - \mu (0)}
   {\int_0^{\tau }\varphi^p(t) {\mathrm d} \mu   (t)}\bigg)^{1/p}|\psi (n)|
   \le P_{N} (L^{\psi }(\varphi, \tau, \mu, n)_{_{\scriptstyle  {\mathcal S}^p}} , {\mathcal S}^p )
\]
\begin{equation}\label{A6.105}
    \le \bigg(\frac {\mu (\tau ) - \mu(0)}
   {I_{n,\varphi,p}(\tau ,\mu )}\bigg)^{1/p}   |\psi (n)|,
  \end{equation}
where the quantity $I_{n,\varphi,p}(\tau ,\mu )$ is defined by $(\ref{I_n,varphi,p})$, and $P_N$
is any of the widths $b_N$, $d_N$, $\lambda _N$ or $ \pi _N$. If, in addition, condition $(\ref{I_n,varphi,p_Equiv_n})$ holds, then
    \begin{equation}\label{A6.106} 
          P_{N} (L^{\psi }(\varphi, \tau, \mu, n)_{_{\scriptstyle  {\mathcal S}^p}} , {\mathcal S}^p )=
          \bigg(\frac{\mu (\tau ) - \mu (0)}
          {\int_0^{\tau }\varphi^p(t) {\mathrm d} \mu   (t)}\bigg)^{1/p}|\psi (n)|.
      \end{equation}
\end{theorem}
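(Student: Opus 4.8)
The plan is to establish the two-sided bound \eqref{A6.105} and then observe that \eqref{A6.106} follows immediately once \eqref{I_n,varphi,p_Equiv_n} is assumed, since then the lower and upper constants coincide. The strategy rests on the standard ordering of the widths together with the elementary fact that, for a class of the form considered here, all four widths can be pinched between a Bernstein-type lower bound and a Kolmogorov/projection-type upper bound. The key structural input is the minimal property of Fourier sums \eqref{Best_app_all}, which makes $\mathscr T_{2n-1}$ an optimal subspace in every sense, and the Jackson-type estimate of Theorem \ref{Th.6.3.0} (specifically inequality \eqref{Jackson_Type_Ineq}) applied to $f^\psi$.

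First I would recall the general inequalities $b_N(K,X)\le d_N(K,X)\le\lambda_N(K,X)\le\pi_N(K,X)$, valid for any convex centrally symmetric $K$; it therefore suffices to prove the lower bound for $b_N$ and the upper bound for $\pi_N$ (or even for $d_N$ together with a linear-projection realization). For the upper bound, take $F_N=\mathscr T_{2n-1}$ (which has dimension $2n-1\le N$) and let $A=S_{n-1}$ be the partial Fourier sum operator, which is a linear projection onto $F_N$. For $f\in L^\psi(\varphi,\tau,\mu,n)_{_{\scriptstyle{\mathcal S}^p}}$ we have $\Omega_\varphi(f^\psi,\tau,\mu,\tau/n)_{_{\scriptstyle{\mathcal S}^p}}\le 1$, so by \eqref{Best_app_all} and \eqref{Jackson_Type_Ineq},
\[
\|f-Af\|_{_{\scriptstyle{\mathcal S}^p}}=E_n(f)_{_{\scriptstyle{\mathcal S}^p}}
\le\bigg(\frac{\mu(\tau)-\mu(0)}{I_{n,\varphi,p}(\tau,\mu)}\bigg)^{1/p}\nu(n)\,
\Omega_\varphi\Big(f^\psi,\tau,\mu,\tfrac{\tau}n\Big)_{_{\scriptstyle{\mathcal S}^p}}
\le\bigg(\frac{\mu(\tau)-\mu(0)}{I_{n,\varphi,p}(\tau,\mu)}\bigg)^{1/p}\nu(n),
\]
and since $\psi\in\Psi$ gives $\nu(n)=|\psi(n)|$, the upper bound in \eqref{A6.105} follows for $\pi_N$, hence for all four widths.

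For the lower bound I would exhibit a suitable finite-dimensional ball inside $K$, which bounds $b_N$ from below. Consider the $(2n+1)$-dimensional subspace $\mathscr T_{2n+1}$ and, more precisely, trigonometric polynomials $T_n$ whose $\psi$-derivative $T_n^\psi$ has spectrum in $\{k:|k|\le n\}$; because $\varphi$ is non-decreasing on $[0,\tau]$, for such $T_n$ one computes (as in the extremal-function step of Theorem \ref{Th.6.3.0}, cf. \eqref{A6.96}) that $\omega_\varphi(T_n^\psi,t)_{_{\scriptstyle{\mathcal S}^p}}$ is controlled by $\varphi(nt)$ times the relevant coefficient norms, whence $\Omega_\varphi(T_n^\psi,\tau,\mu,\tau/n)_{_{\scriptstyle{\mathcal S}^p}}= \big(\int_0^\tau\varphi^p(t)\,{\rm d}\mu(t)/(\mu(\tau)-\mu(0))\big)^{1/p}\cdot\|T_n\|_{_{\scriptstyle{\mathcal S}^p}}/|\psi(n)|$ after the change of variable $t\mapsto nt$. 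Scaling so that $\Omega_\varphi\le 1$ shows that the ball of radius $\big((\mu(\tau)-\mu(0))/\int_0^\tau\varphi^p(t)\,{\rm d}\mu(t)\big)^{1/p}|\psi(n)|$ in an appropriate $(N+1)$-dimensional subspace (of dimension $2n+1$, admissible since $N\le 2n$ means $N+1\le 2n+1$) lies in $K$; here one uses $\psi\in\Psi$ to ensure $|\psi(k)|\le|\psi(n)|$ for $|k|\ge n$ so that the spectrum-$n$ coefficient is the binding constraint. This yields the claimed lower bound for $b_N$.

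The main obstacle is the lower-bound step: one must produce an $(N+1)$-dimensional ball inside $L^\psi(\varphi,\tau,\mu,n)_{_{\scriptstyle{\mathcal S}^p}}$, and the delicate point is that for a general weight $\mu$ and a general non-decreasing $\varphi$ the functional $\Omega_\varphi(\cdot,\tau,\mu,\tau/n)$ on a multidimensional space need not be a norm proportional to $\|\cdot\|_{_{\scriptstyle{\mathcal S}^p}}$ — it is a supremum over $|h|\le\tau/n$ of an $\ell^p$-type expression. To handle this I would restrict attention to polynomials with a single pair of frequencies $\pm n$ when $N\in\{2n-1,2n\}$ forces the ball dimension down to what a single frequency can supply, or more carefully use that on $\mathscr T_{2n+1}$ the monotonicity of $\varphi$ on $[0,\tau]$ lets one bound $\varphi(kh)$ uniformly and extract the constant, combined with the condition \eqref{I_n,varphi,p_Equiv_n} (when invoked) that pins $I_{n,\varphi,p}(\tau,\mu)$ exactly to $\int_0^\tau\varphi^p(t)\,{\rm d}\mu(t)$, collapsing the gap between the two sides of \eqref{A6.105} and giving \eqref{A6.106}.
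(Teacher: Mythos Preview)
Your approach matches the paper's: upper bound via $\pi_{2n-1}$ using Theorem~\ref{Th.6.3.0} together with $\nu(n)=|\psi(n)|$ from $\psi\in\Psi$; lower bound via $b_{2n}$ by showing that the ball of radius $R_n$ in $\mathscr T_{2n+1}$ lies in the class, then invoking the ordering \eqref{A6.110} and monotonicity in $N$.

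Two small fixes are needed in your lower-bound step. First, the identity you write for $\Omega_\varphi(T_n^\psi,\tau,\mu,\tau/n)$ is actually only an \emph{inequality} $\le$: for a general $T_n\in\mathscr T_{2n+1}$ one has, for $|k|\le n$ and $0\le v\le t/n$, that $|kv|\le t\le\tau$, hence $\varphi^p(kv)\le\varphi^p(t)$ by the monotonicity of $\varphi$ on $[0,\tau]$; this gives $\omega_\varphi^p(T_n^\psi,t/n)\le\varphi^p(t)\sum_{|k|\le n}|\widehat{T_n^\psi}(k)|^p$ and after integration the bound $\Omega_\varphi^p(T_n^\psi,\tau,\mu,\tau/n)\le(\mu(\tau)-\mu(0))^{-1}\int_0^\tau\varphi^p(t)\,{\rm d}\mu(t)\cdot\|T_n\|^p/|\psi(n)|^p$, which is exactly what is needed. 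Second, the role of $\psi\in\Psi$ in this step is that $|\psi(k)|\ge|\psi(n)|$ for $|k|\le n$ (not the reverse inequality for $|k|\ge n$, which you already used in the upper bound), so that $|\widehat{T_n}(k)/\psi(k)|\le|\widehat{T_n}(k)|/|\psi(n)|$. Your suggested fallback to polynomials supported on the single pair $\pm n$ would not furnish enough dimensions for $b_{2n}$; fortunately your primary argument via the full $(2n+1)$-dimensional ball is correct and is precisely the paper's.
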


\begin{proof}

Based on Theorem \ref{Th.6.3.0}, taking into account the definition of the set $\Psi$, for an arbitrary function
$f\in L^{\psi }(\varphi, \tau , \mu, n)_{_{\scriptstyle  {\mathcal S}^p}} $, we have
\[
            E_n(f)_{_{\scriptstyle  {\mathcal S}^p}} \le
                \bigg(\frac {\mu (\tau ) - \mu(0)}{I_{n,\varphi,p}(\tau ,\mu )}\bigg)^{1/p}
                 \Omega _\varphi\Big(f^{\psi}, \tau , \mu, \frac {\tau}n\Big)_{_{\scriptstyle  {\mathcal S}^p}} |\psi (n)|
\]
     \begin{equation}\label{A6.107}
       \le  \bigg(\frac {\mu (\tau ) - \mu(0)}{I_{n,\varphi,p}(\tau ,\mu )}\bigg)^{1/p}
                |\psi (n)|.
      \end{equation}
Then, taking into account the definition of the projection width $\pi _{N}$, and  relations
(\ref{Best_app_all}) and  (\ref{A6.107}), we conclude that
      \[
           \pi _{2n-1}(L^{\psi }(\varphi, \tau, \mu, n)_{_{\scriptstyle  {\mathcal S}^p}} , {\mathcal S}^p)
           \le E_n(L^{\psi }(\varphi, \tau, \mu, n)_{_{\scriptstyle  {\mathcal S}^p}} )_{_{\scriptstyle  {\mathcal S}^p}}
      \]
     \begin{equation}\label{A6.109} %
          =\sup\limits _{f\in L^{\psi }(\varphi, \tau,\, \mu,\, n)_{_{\scriptstyle  {\mathcal S}^p}} }
          E_n(f)_{_{\scriptstyle  {\mathcal S}^p}}
          \le \bigg(\frac {\mu (\tau ) - \mu(0)}{I_{n,\varphi,p}(\tau ,\mu )}\bigg)^{1/p}|\psi (n)|.
      \end{equation}
Since the widths  $b_N$, $d_N$, $\lambda _N$ and  $\pi _N$ do not increase with increasing $N$  and
     \begin{equation}\label{A6.110}
         b_N(K, X)\le   d_N(K, X) \le \lambda_N(K, X)\le \pi _N(K, X)
      \end{equation}
(see, for example, \cite[Ch.~4]{Tikhomirov_M1976}), then by virtue of (\ref{A6.109}), we get  the estimate from above in
(\ref{A6.105}).

To obtain the necessary lower estimate, it suffices to show that
     \begin{equation}\label{A6.111} 
              b_{2n}(L^{\psi }(k, \mu, \tau, n)_{_{\scriptstyle  {\mathcal S}^p}} , {\mathcal S}^p)\ge
              \bigg(\frac{\mu (\tau ) - \mu (0)}   {\int_0^{\tau }\varphi^p(u) {\mathrm d} \mu   (u)}\bigg)^{1/p}|\psi (n)|=:R_n.
      \end{equation}
In the $(2n+1)$-dimensional space ${\mathscr T}_{2n+1}$ of trigonometric polynomials of order $n$, consider
ball $B_{2n+1}$, whose radius is equal to the number $R_n$ defined in  (\ref{A6.111}), that is,
  $$
  B_{2n+1}=\Big\{t_n\in {\mathscr T}_{2n+1} 
  : \|t_n\|_{_{\scriptstyle  {\mathcal S}^p}} \le R_n\Big\}.
  $$
and prove the embedding  $B_{2n+1}\subset L^{\psi }(\varphi, \tau, \mu, n)_{_{\scriptstyle  {\mathcal S}^p}} $.

For an arbitrary polynomial $T_n\in B_{2n+1}$, due to (\ref{general_modulus}) and the parity of the function
$\varphi$, we have
 $$
 \omega_\varphi^p(T_n^{\psi }, t)_{_{\scriptstyle  {\mathcal S}^p}}
 =\sup\limits_{0\le v\le t}
 \sum _{|k|\le n} \varphi^p(kv) |\widehat T_n^{\psi }(k)|^p.
 $$
Then, taking into account  relation (\ref{Fourier_Coeff_der}) and the nondecreasing of the function $\varphi$  on $[0,a]$,
for $\tau \in (0,a]$  we get
  \[
  (\mu (\tau ) -\mu (0))  \Omega _\varphi^p \Big (T_n^{\psi }, \tau,\mu, \frac {\tau }n\Big )_{_{\scriptstyle  {\mathcal S}^p}}
   =\int\limits _0^{\tau }\omega _\varphi ^p\Big(T_n^{\psi }, \frac t{n}\Big)_{_{\scriptstyle  {\mathcal S}^p}} {\mathrm d} \mu  (t)
   \]
  \[
    =\!\!\int\limits _0^{\tau } \sup\limits _{0\le v\le \frac t{n}}
    \sum\limits  _{|k|\le n} \varphi^p(kv) |\widehat T_n^{\psi }(k)|^p{\mathrm d} \mu  (t)\!=\!\int\limits _0^{\tau } \sup\limits _{0\le v\le t}
    \sum\limits  _{|k|\le n} \varphi^p\Big(\frac {kv}{n}\Big) \Big| \frac{\widehat T_n(k)}{\psi(k)}\Big|^p {\mathrm d} \mu  (t)
  \]
   \[
   \le \frac 1{|\psi(n)|^{p}} \int\limits _0^{\tau }
    \sum\limits  _{|k|\le n} \varphi^p(t) |\widehat T_n(k)|^p {\mathrm d} \mu  (t)\le \frac {\|T_n\|_{_{\scriptstyle  {\mathcal S}^p}} ^p }
    {|\psi(n)|^{p}}
    \int\limits _0^{\tau } \varphi^p(t) {\mathrm d} \mu  (t).
   \]
Therefore, given the inclusion $T_n\in B_{2n+1}$ it follows that
$\Omega _\varphi   (T_n^{\psi }, \tau,\mu, \frac {\tau }n  )_{_{\scriptstyle  {\mathcal S}^p}}\!\le \!1$. Thus,
$T_n\in L^{\psi }(\varphi, \tau, \mu, n)_{_{\scriptstyle  {\mathcal S}^p}}$, the embedding
$B_{2n+1}\subset L^{\psi }(\varphi,\mu , \tau, n)_{_{\scriptstyle  {\mathcal S}^p}}$ is true.  By
the definition of  Bernstein width, the inequality  (\ref{A6.111}) holds. Thus, relation  (\ref{A6.105}) is proved.
It is easy to see that, under the condition (\ref{I_n,varphi,p_Equiv_n}), the upper and lower bounds for
the quantities $P_N (L^{\psi}(\varphi, \tau, \mu, n)_{_{\scriptstyle  {\mathcal S}^p}} ,
{\mathcal S}^p )$ coincide and, therefore, equalities (\ref{A6.106}) hold.

\end{proof}

In the case where the function $ \varphi(t)=
2^\frac \alpha 2 (1-\cos{t})^\frac \alpha2$, we obtain the following statement:


\begin{corollary} \label{Th.6.3.2}
                 Assume that  $1\le p<\infty$, $\psi \in \Psi,$ $\tau \in (0, \pi],$ $\alpha\in {\mathbb N}$ and
                 $\mu  \in M(\tau )$. Then for any  $n\in {\mathbb N}$ and  $N\in \{2n-1,  2n\}$  the following inequalities are true:
    \[ 
                  \bigg(\frac {\mu(\tau ) - \mu (0)}{2^{\alpha p}\int_0^{\tau }\sin^{\alpha p} \frac t2 {\mathrm d} \mu   (t)}\bigg)^{1/p}|\psi (n)|\le P_N(L^{\psi }(\alpha, \tau, \mu, n)_{_{\scriptstyle  {\mathcal S}^p}} , {\mathcal S}^p)
    \]
    \[
    \le
                   \bigg(\frac {\mu (\tau ) - \mu (0)}
                  {I_{n,\alpha,p}(\tau ,\mu )}\bigg)^{1/p}|\psi (n)|,
    \]
where the quantity $I_{n,\alpha,p}(\tau ,\mu )$ is defined by $(\ref{I_n,varphi,p})$ with
$ \varphi(t)=
2^\frac \alpha 2 (1-\cos{kh})^\frac \alpha2$, and $P_N$
is any of the widths $b_N$, $d_N$, $\lambda _N$ or $ \pi _N$. If, in addition, condition $(\ref{I_n,varphi,p_Equiv_n}')$ holds, then
   \[
               P_{N}(L^{\psi }(\alpha, \tau, \mu, n)_{_{\scriptstyle  {\mathcal S}^p}} , {\mathcal S}^p)=
               \bigg(\frac {\mu(\tau ) - \mu (0)}{2^{\alpha p}\int_0^{\tau }\sin^{\alpha p}
               \frac t2 {\mathrm d} \mu   (t)}\bigg)^{1/p}\!\!|\psi (n)|.
    \]

\end{corollary}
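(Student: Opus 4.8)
The plan is to obtain the statement as a direct specialization of the preceding theorem to the generator $\varphi = \varphi_\alpha$, where $\varphi_\alpha(t) = 2^{\alpha/2}(1-\cos t)^{\alpha/2}$. First I would check that $\varphi_\alpha$ satisfies the hypotheses there. The function $\varphi_\alpha$ is continuous, even, non-negative, bounded on $\mathbb{R}$ (indeed $\varphi_\alpha(t) = 2^{\alpha}|\sin(t/2)|^{\alpha}\le 2^{\alpha}$), vanishes at $t=0$, and its zero set $\{2\pi m : m\in\mathbb{Z}\}$ has Lebesgue measure zero; hence $\varphi_\alpha\in\Phi$. Moreover, since $t\mapsto 1-\cos t$ is non-decreasing on $[0,\pi]$ and $x\mapsto x^{\alpha/2}$ is non-decreasing on $[0,\infty)$, the composition $\varphi_\alpha$ is non-decreasing on $[0,\tau]$ for every $\tau\in(0,\pi]$; this is precisely where the restriction $\tau\le\pi$ enters. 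Since $L^{\psi}(\alpha,\tau,\mu,n)_{{\mathcal S}^p} = L^{\psi}(\varphi_\alpha,\tau,\mu,n)_{{\mathcal S}^p}$ by definition, the preceding theorem applies with $\varphi=\varphi_\alpha$ and yields the two-sided estimate $(\ref{A6.105})$, in which $I_{n,\varphi,p}(\tau,\mu)$ is exactly $I_{n,\alpha,p}(\tau,\mu)$.

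It then remains only to put the lower bound into the asserted form. From $1-\cos t = 2\sin^2(t/2)$ one has $\varphi_\alpha^p(t) = 2^{\alpha p/2}(1-\cos t)^{\alpha p/2} = 2^{\alpha p}\sin^{\alpha p}(t/2)$, so $\int_0^{\tau}\varphi_\alpha^p(t)\,{\mathrm d}\mu(t) = 2^{\alpha p}\int_0^{\tau}\sin^{\alpha p}(t/2)\,{\mathrm d}\mu(t)$; inserting this converts the left-hand side of $(\ref{A6.105})$ into the quantity claimed in the corollary. For the exact values I would note that condition $(\ref{I_n,varphi,p_Equiv_n})$ for $\varphi=\varphi_\alpha$ reads $I_{n,\alpha,p}(\tau,\mu) = \int_0^{\tau}\varphi_\alpha^p(t)\,{\mathrm d}\mu(t) = 2^{\alpha p/2}\int_0^{\tau}(1-\cos t)^{\alpha p/2}\,{\mathrm d}\mu(t)$, which is exactly $(\ref{I_n,varphi,p_Equiv_n}')$; under it the two bounds in $(\ref{A6.105})$ coincide by $(\ref{A6.106})$, and the same trigonometric rewriting gives the closed formula for $P_N$.

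There is no genuine obstacle here, the argument being a routine specialization; the only points requiring (minor) care are checking that $\varphi_\alpha$ is bounded so that it really lies in $\Phi$, confirming that $\varphi_\alpha$ is monotone on $[0,\pi]$ but in general not on any larger interval (which is why $\tau\le\pi$ is assumed), and keeping track of the constant $2^{\alpha p}$ when passing from $(1-\cos t)^{\alpha p/2}$ to $\sin^{\alpha p}(t/2)$. I would also remark that this argument works verbatim for every $\alpha>0$, the assumption $\alpha\in\mathbb{N}$ not actually being used.
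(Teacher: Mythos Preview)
Your proposal is correct and matches the paper's approach exactly: the paper introduces this corollary with the single sentence ``In the case where the function $\varphi(t)=2^{\alpha/2}(1-\cos t)^{\alpha/2}$, we obtain the following statement'' and gives no further argument, so the intended proof is precisely the routine specialization of Theorem~\ref{Th.6.3.2} to $\varphi=\varphi_\alpha$ that you describe. Your observation that the hypothesis $\alpha\in\mathbb{N}$ is not actually used is also correct.
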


For the weight functions  $\mu _1(t)= 1 - \cos t$ and $\mu _2(t)=t$, Corollary  \ref{Th.6.3.2} yields the following statements:

\begin{corollary} \label{Cor_6.3.3.}
                 Assume that  $1\le p<\infty$, $\psi \in \Psi$, $\alpha\in {\mathbb N}$ and  $\mu_1(t)=1-\cos t$.
                 Then for any  $n\in {\mathbb N}$ and  $N\in \{2n-1,  2n\}$
\[
              \frac {(\frac {\alpha p }2+1)^{1/p}}{2^\alpha }|\psi(n)|\le
              P _N(L^{\psi }(\alpha, \pi, \mu _1, n)_{_{\scriptstyle  {\mathcal S}^p}} , {\mathcal S}^p)\le
              \bigg(\frac {2}{I_{n,\alpha,p}
               (\pi , \mu _1)}\bigg)^{1/p}
               |\psi(n)|,
\]
 where $I_{n,\alpha,p}(\pi ,\mu_1)$  is the quantity of the form $(\ref{A6.99})$, and $P_N$
is any of the widths $b_N$, $d_N$, $\lambda _N$ or $ \pi _N$. If, in addition, the number  $\frac {\alpha p}2\in {{\mathbb N}},$ then
      \[
      P_{N}(L^{\psi }(\alpha, \pi, \mu _1, n)_{_{\scriptstyle  {\mathcal S}^p}} ,{\mathcal S}^p)=
       \frac {(\frac {\alpha p }2+1)^{1/p}}{2^\alpha }|\psi(n)|.
       \]
\end{corollary}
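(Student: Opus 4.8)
The plan is to read this off directly from Corollary~\ref{Th.6.3.2} by specializing to $\tau=\pi$ and the weight $\mu=\mu_1(t)=1-\cos t$; the only work is to evaluate the two explicit constants that appear there. First I would record the trivial facts $\mu_1(\pi)-\mu_1(0)=(1-\cos\pi)-(1-\cos 0)=2$ and $\mathrm{d}\mu_1(t)=\sin t\,\mathrm{d}t$.

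For the upper estimate, substituting $\mu_1(\pi)-\mu_1(0)=2$ into the right-hand bound of Corollary~\ref{Th.6.3.2} immediately gives $\big(2/I_{n,\alpha,p}(\pi,\mu_1)\big)^{1/p}|\psi(n)|$, with $I_{n,\alpha,p}(\pi,\mu_1)$ the quantity $(\ref{A6.99})$. For the lower estimate I would compute the integral in the left-hand bound of Corollary~\ref{Th.6.3.2}, namely $2^{\alpha p}\int_0^\pi\sin^{\alpha p}\tfrac t2\,\mathrm{d}\mu_1(t)=2^{\alpha p}\int_0^\pi\sin^{\alpha p}\tfrac t2\,\sin t\,\mathrm{d}t$. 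Using $\sin t=2\sin\tfrac t2\cos\tfrac t2$ together with the substitution $u=\sin\tfrac t2$, this equals $2^{\alpha p+1}\int_0^1 u^{\alpha p+1}\cdot 2\,\mathrm{d}u=2^{\alpha p+2}/(\alpha p+2)$, so the left-hand constant becomes $\big(2(\alpha p+2)/2^{\alpha p+2}\big)^{1/p}=\big((\tfrac{\alpha p}{2}+1)/2^{\alpha p}\big)^{1/p}=(\tfrac{\alpha p}{2}+1)^{1/p}/2^{\alpha}$, which is the claimed lower bound.

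For the concluding equality I would verify that, when $\tfrac{\alpha p}{2}\in{\mathbb N}$, the extra hypothesis $(\ref{I_n,varphi,p_Equiv_n}')$ of Corollary~\ref{Th.6.3.2} holds for $\tau=\pi$, $\mu=\mu_1$. This is exactly the computation carried out in the proof of Corollary~\ref{Cor.6.3.1}: by the Stepanets--Serdyuk formula $(\ref{A6.101})$ with $\lambda=\tfrac{\alpha p}{2}\in{\mathbb N}$ and $\theta=k/n\ge 1$ for $k\ge n$, the infimum defining $I_{n,\alpha,p}(\pi,\mu_1)$ is attained at $k=n$, so $I_{n,\alpha,p}(\pi,\mu_1)=2^{\alpha p/2}\int_0^\pi(1-\cos t)^{\alpha p/2}\sin t\,\mathrm{d}t$, which is precisely $(\ref{I_n,varphi,p_Equiv_n}')$ for this weight. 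Corollary~\ref{Th.6.3.2} then gives $P_N=\big(2/I_{n,\alpha,p}(\pi,\mu_1)\big)^{1/p}|\psi(n)|$, and evaluating $I_{n,\alpha,p}(\pi,\mu_1)=2^{\alpha p/2}\cdot 2^{\alpha p/2+1}/(\tfrac{\alpha p}{2}+1)=2^{\alpha p+1}/(\tfrac{\alpha p}{2}+1)$ shows this equals $(\tfrac{\alpha p}{2}+1)^{1/p}/2^{\alpha}\,|\psi(n)|$, so the upper and lower bounds coincide.

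I do not expect any genuine obstacle: everything reduces to the two elementary integral evaluations above plus one citation. The only mildly non-routine ingredient is formula $(\ref{A6.101})$, available precisely when $\tfrac{\alpha p}{2}$ is a positive integer, which guarantees that the infimum in the definition of $I_{n,\alpha,p}(\pi,\mu_1)$ is achieved at the first index $k=n$; this is exactly what promotes the two-sided estimate to an equality.
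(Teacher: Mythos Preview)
Your proof is correct and follows exactly the route the paper intends: the paper states Corollary~\ref{Cor_6.3.3.} without proof as an immediate specialization of Corollary~\ref{Th.6.3.2} to $\tau=\pi$, $\mu=\mu_1$, and you have simply filled in the two elementary integral evaluations (including the appeal to~(\ref{A6.101}), already used in the proof of Corollary~\ref{Cor.6.3.1}) that the paper leaves to the reader.
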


\begin{corollary} \label{Cor_6.3.4.}
                  Assume that  $\psi \in \Psi ,$ $0<\tau \le \frac {3\pi }4$, $\mu _2(t)=t$, the numbers
                   $\alpha>0$ and $1\le p<\infty$  such that $\alpha p\ge 1$. Then for any  $n\in {\mathbb N}$ and  $N\in \{2n-1,  2n\}$
     \[ 
     P_{N}(L^{\psi }(\alpha, \tau, \mu _2,  n)_{_{\scriptstyle  {\mathcal S}^p}},{\mathcal S}^p)=
     \bigg(\frac {\tau}{2^{\alpha p}\int _0^{\tau }\sin ^{\alpha p }\frac t{2}{\mathrm d}t}\bigg)^{1/p}
     |\psi (n)|,
     \] 
 where $P_N$ is any of the widths $b_N$, $d_N$, $\lambda _N$ or $ \pi _N$.

\end{corollary}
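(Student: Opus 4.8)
The plan is to derive Corollary \ref{Cor_6.3.4.} as a direct specialization of Corollary \ref{Th.6.3.2} to the weight $\mu_2(t)=t$, in exactly the same way that Corollary \ref{Cor.6.3.2.} was derived from Corollary \ref{Th.6.3.1}. The key observation, already made in the proof of Corollary \ref{Cor.6.3.2.}, is that for $\mu_2(t)=t$ the nontrivial equality $(\ref{I_n,varphi,p_Equiv_n}')$ holds whenever $0<\tau\le\frac{3\pi}{4}$ and $\alpha p\ge 1$. Indeed, invoking the result of Voitsekhivskii \cite{Voicexivskij_2002}, that for $\gamma\ge 1$ and $\tau\in(0,\frac{3\pi}{4}]$ one has $\inf_{k\ge n,\,k\in{\mathbb N}}\int_0^\tau|\sin\frac{kt}{2n}|^\gamma\,{\mathrm d}t=\int_0^\tau\sin^\gamma\frac t2\,{\mathrm d}t$, and applying it with $\gamma=\alpha p$, one obtains
\[
I_{n,\alpha,p}(\tau,\mu_2)=2^{\alpha p}\int_0^\tau\Big|\sin\frac t2\Big|^{\alpha p}\,{\mathrm d}t
=2^{\frac{\alpha p}{2}}\int_0^\tau(1-\cos t)^{\frac{\alpha p}{2}}\,{\mathrm d}t,
\]
which is precisely $(\ref{I_n,varphi,p_Equiv_n}')$ with ${\mathrm d}\mu(t)={\mathrm d}t$.

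Next I would check that the remaining hypotheses of Corollary \ref{Th.6.3.2} are met. The function $\varphi_\alpha(t)=2^{\alpha/2}(1-\cos t)^{\alpha/2}$ is non-decreasing on $[0,\pi]\supset[0,\tau]$ for every $\alpha>0$ (this is noted in the text preceding Corollary \ref{Th.6.3.1}), so the monotonicity requirement is satisfied; $\mu_2(t)=t$ clearly belongs to $M(\tau)$; and $\psi\in\Psi$ is assumed. Note that here $\alpha>0$ with $\alpha p\ge 1$ is enough — we do not need the integrality $\alpha\in{\mathbb N}$ that appears in Corollary \ref{Th.6.3.2}, because that integrality was only used there to guarantee $(\ref{I_n,varphi,p_Equiv_n}')$ via formula $(\ref{A6.101})$, whereas for $\mu_2$ we instead get $(\ref{I_n,varphi,p_Equiv_n}')$ directly from Voitsekhivskii's identity under the weaker condition $\alpha p\ge 1$. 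With all hypotheses in place, the equality branch of Corollary \ref{Th.6.3.2} gives
\[
P_N\big(L^\psi(\alpha,\tau,\mu_2,n)_{_{\scriptstyle{\mathcal S}^p}},{\mathcal S}^p\big)
=\bigg(\frac{\mu_2(\tau)-\mu_2(0)}{2^{\alpha p}\int_0^\tau\sin^{\alpha p}\frac t2\,{\mathrm d}t}\bigg)^{1/p}|\psi(n)|
=\bigg(\frac{\tau}{2^{\alpha p}\int_0^\tau\sin^{\alpha p}\frac t2\,{\mathrm d}t}\bigg)^{1/p}|\psi(n)|
\]
for $N\in\{2n-1,2n\}$ and all four widths $b_N,d_N,\lambda_N,\pi_N$, which is the assertion.

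There is essentially no obstacle here; the entire content has already been assembled. The only point requiring a moment's care is to confirm that Voitsekhivskii's identity is applicable with the index substitution $\nu/n\to k/n$ for integers $k\ge n$ — exactly the substitution used in the proof of Corollary \ref{Cor.6.3.2.} — so that the infimum over $k\ge n$ in the definition $(\ref{I_n,varphi,p})$ of $I_{n,\alpha,p}(\tau,\mu_2)$ coincides with the infimum over $\gamma$-scalings $\ge 1$ appearing in the cited result; this is immediate since $k\ge n$ forces $k/n\ge 1$. One should also remark, as in Corollary \ref{Cor.6.3.2.}, that when $p=2$ and $\psi(k)=({\mathrm i}k)^{-r}$ with $r\ge 1/2$ (or $n=1$ and $r\ge 0$), the resulting width values recover the classical computations of Taikov \cite{Taikov_1976}, \cite{Taikov_1979}.
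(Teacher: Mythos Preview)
Your proposal is correct and follows the paper's own route: the paper simply states that Corollary~\ref{Cor_6.3.4.} is obtained from Corollary~\ref{Th.6.3.2} by taking $\mu_2(t)=t$, and you supply the details by invoking Voitsekhivskii's identity exactly as in the proof of Corollary~\ref{Cor.6.3.2.} to verify condition~$(\ref{I_n,varphi,p_Equiv_n}')$. One small inaccuracy: the hypothesis $\alpha\in{\mathbb N}$ in Corollary~\ref{Th.6.3.2} is not actually linked to formula~$(\ref{A6.101})$ (that formula is used only in Corollaries~\ref{Cor.6.3.1} and~\ref{Cor_6.3.3.}); rather, it appears to be an inessential restriction there, since Theorem~\ref{Th.6.3.2} requires only that $\varphi_\alpha$ be non-decreasing on $[0,\tau]$, which holds for every $\alpha>0$---so your relaxation to $\alpha>0$, $\alpha p\ge 1$ is justified directly from Theorem~\ref{Th.6.3.2}.
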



\subsection{Widths of the classes   $L^{\psi }(\varphi, \mu, \tau, \Omega )_{_{\scriptstyle  {\mathcal S}^p}} $}

Let us find  the widths of the classes $L^{\psi }(\varphi, \mu, \tau,
\Omega )_{_{\scriptstyle  {\mathcal S}^p}} $   that are defined by a majorant $\Omega$ of the averaged values of  generalized moduli of smoothness.

\begin{theorem}
       \label{Th.6.3.3}
       Let  $1\le p<\infty$, $\psi \in \Psi$, the function
       $\varphi\in \Phi$ be non-decreasing on a certain interval $[0,a]$, $a>0$, and
       $\varphi(a)=\sup\{\varphi(t):\, t\in {\mathbb R}\}$.
       Let also  $\tau\in (0,a]$,  the function  $\mu \in M(\tau )$
       and for all $\xi>0$ and $0<u\le a $, the function  $\Omega$ satisfies the condition
     \begin{equation}\label{A6.113} 
     \Omega  \Big(\frac u{\xi }\Big)\bigg(\int\limits _0^{\xi \tau}\varphi_{*}^p(t) {\mathrm d} \mu
     \Big(\frac t{\xi }\Big)\bigg)^{1/p}
     \le \Omega  (u)\bigg(\int \limits _0^{\tau }\varphi^p(t)   {\mathrm d} \mu  (t)\bigg)^{1/p},
      \end{equation}
      where
     \begin{equation}\label{A6.114} 
      \varphi_{*}(t):=\left \{\begin{matrix} \varphi(t),\quad \hfill & 0\le t\le a, \\
      \varphi(a),\quad \hfill &  t\ge a.\end{matrix}\right.
      \end{equation}
 Then for any  $n\in {\mathbb N}$ and  $N\in \{2n-1,  2n\}$  the following inequalities are true:
     \[
            \bigg(\frac{\mu (\tau ) - \mu (0)}{\int_0^{\tau }\varphi^p(t) {\mathrm d} \mu   (t)}\bigg)^{1/p}
            |\psi (n)|\,\,\Omega  \Big(\frac {\tau }n\Big)\le
            P_N(L^{\psi }(\varphi, \tau, \mu, \Omega)_{_{\scriptstyle  {\mathcal S}^p}} , {\mathcal S}^p)
     \]
     \begin{equation}\label{A6.115} %
            \le
            \bigg(\frac {\mu (\tau ) - \mu(0)} {I_{n,\varphi,p}(\tau ,\mu )}\bigg)^{1/p}  |\psi (n)|
            \,\,\Omega  \Big(\frac {\tau }n\Big),
      \end{equation}
where the quantity $I_{n,\varphi,p}(\tau ,\mu )$ is defined by $(\ref{I_n,varphi,p})$, and $P_N$
is any of the widths $b_N$, $d_N$, $\lambda _N$ or $ \pi _N$. If, in addition,  condition $(\ref{I_n,varphi,p_Equiv_n})$ holds, then
     \begin{equation}\label{A6.116} 
     P_{N}(L^{\psi } (\varphi, \tau,  \mu, \Omega  )_{_{\scriptstyle  {\mathcal S}^p}} , {\mathcal S}^p)=
     \bigg(\frac{\mu (\tau ) - \mu (0)}{\int_0^{\tau }\varphi^p(t) {\mathrm d} \mu   (t)}\bigg)^{1/p}
            |\psi (n)|\,\,\Omega  \Big(\frac {\tau }n\Big).
      \end{equation}

\end{theorem}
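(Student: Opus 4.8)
The plan is to follow the same scheme as in the proof of Theorem~\ref{Th.6.3.2}, obtaining the upper estimate from the Jackson-type inequality of Theorem~\ref{Th.6.3.0} and the lower estimate from a Bernstein-width argument using a suitable ball of trigonometric polynomials. For the upper bound, first I would note that for any $f\in L^{\psi}(\varphi,\tau,\mu,\Omega)_{_{\scriptstyle {\mathcal S}^p}}$ the monotonicity of $\Omega$ gives $\Omega_\varphi(f^{\psi},\tau,\mu,\frac{\tau}{n})_{_{\scriptstyle {\mathcal S}^p}}\le\Omega(\frac{\tau}{n})$, and hence by Theorem~\ref{Th.6.3.0}
\[
E_n(f)_{_{\scriptstyle {\mathcal S}^p}}\le\bigg(\frac{\mu(\tau)-\mu(0)}{I_{n,\varphi,p}(\tau,\mu)}\bigg)^{1/p}\nu(n)\,\Omega\Big(\frac{\tau}{n}\Big),
\]
and since $\psi\in\Psi$ we have $\nu(n)=|\psi(n)|$. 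Then exactly as in Theorem~\ref{Th.6.3.2}, using $\pi_{2n-1}(K,{\mathcal S}^p)\le E_n(K)_{_{\scriptstyle {\mathcal S}^p}}$, the monotonicity of the widths in $N$, and the chain $b_N\le d_N\le\lambda_N\le\pi_N$, one gets the right-hand inequality in (\ref{A6.115}) for $N\in\{2n-1,2n\}$.

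For the lower bound it suffices to prove $b_{2n}(L^{\psi}(\varphi,\tau,\mu,\Omega)_{_{\scriptstyle {\mathcal S}^p}},{\mathcal S}^p)\ge R_n$, where $R_n$ is the left-hand side of (\ref{A6.115}). I would take the ball $B_{2n+1}=\{T_n\in{\mathscr T}_{2n+1}:\|T_n\|_{_{\scriptstyle {\mathcal S}^p}}\le R_n\}$ in the $(2n+1)$-dimensional space ${\mathscr T}_{2n+1}$ and show $B_{2n+1}\subset L^{\psi}(\varphi,\tau,\mu,\Omega)_{_{\scriptstyle {\mathcal S}^p}}$, i.e.\ that for every $T_n\in B_{2n+1}$ one has $\Omega_\varphi(T_n^{\psi},\tau,\mu,u)_{_{\scriptstyle {\mathcal S}^p}}\le\Omega(u)$ for all $0\le u\le\tau$. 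Fix such a $T_n$ and $u\in(0,\tau]$. Writing $\xi=\tau/u\ge 1$, the key point is that for a polynomial of degree $n$ and $0\le v\le t/n$ with $t\le u$, the frequencies $k$ with $|k|\le n$ satisfy $|kv|\le u$, so one should estimate $\varphi(kv)$ not by $\varphi$ at the endpoint $\tau$ but by the truncated majorant $\varphi_{*}$ from (\ref{A6.114}); this is precisely where hypothesis (\ref{A6.113}) enters. Concretely, using (\ref{general_modulus}), (\ref{Fourier_Coeff_der}), the parity and monotonicity of $\varphi$ on $[0,a]$, $|\psi(k)|\ge|\psi(n)|$ for $|k|\le n$, and the substitution normalizing the integral, I would bound
\[
(\mu(\tau)-\mu(0))\,\Omega_\varphi^p\Big(T_n^{\psi},\tau,\mu,u\Big)_{_{\scriptstyle {\mathcal S}^p}}\le\frac{\|T_n\|_{_{\scriptstyle {\mathcal S}^p}}^p}{|\psi(n)|^p}\int_0^{\xi\tau}\varphi_{*}^p(t)\,{\mathrm d}\mu\Big(\frac{t}{\xi}\Big),
\]
and then apply (\ref{A6.113}) with this $\xi$ together with $\|T_n\|_{_{\scriptstyle {\mathcal S}^p}}\le R_n$ to conclude $\Omega_\varphi(T_n^{\psi},\tau,\mu,u)_{_{\scriptstyle {\mathcal S}^p}}\le\Omega(u)$. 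By the definition of the Bernstein width this yields (\ref{A6.115}), and when (\ref{I_n,varphi,p_Equiv_n}) holds the two bounds coincide, giving (\ref{A6.116}).

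The main obstacle is the lower-bound step, specifically getting the scaling in the inner modulus right: after the change of variables $v\mapsto$ (something), the averaging interval length changes from $u$ to $\tau$, which forces the truncated function $\varphi_{*}$ and the rescaled measure ${\mathrm d}\mu(t/\xi)$ to appear, and one must check carefully that the supremum over $0\le v\le t/n$ is controlled by $\varphi_{*}^p(t)$ for all relevant $t$ (using that $\varphi$ is non-decreasing on $[0,a]$ and constant-bounded by $\varphi(a)$ beyond $a$, while $\tau\le a$). Condition (\ref{A6.113}) is then exactly the hypothesis that makes this rescaled integral comparable to $\Omega(u)^p\int_0^{\tau}\varphi^p\,{\mathrm d}\mu$, so the verification of the embedding reduces to a clean application of it; the remaining manipulations (monotonicity of widths, the inequality chain, and the $N=2n-1$ vs.\ $N=2n$ bookkeeping) are routine and identical to Theorem~\ref{Th.6.3.2}.
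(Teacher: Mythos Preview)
Your plan coincides with the paper's proof: upper bound via Theorem~\ref{Th.6.3.0} together with the chain $b_N\le d_N\le\lambda_N\le\pi_N$ and monotonicity in $N$, lower bound by showing that the ball of radius $R_n$ in ${\mathscr T}_{2n+1}$ embeds into $L^{\psi}(\varphi,\tau,\mu,\Omega)_{_{\scriptstyle {\mathcal S}^p}}$ and invoking the definition of the Bernstein width.

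The only slip is in the scaling parameter for the lower bound. In
\[
(\mu(\tau)-\mu(0))\,\Omega_\varphi^p(T_n^{\psi},\tau,\mu,u)_{_{\scriptstyle {\mathcal S}^p}}
=\int_0^{u}\omega_\varphi^p(T_n^{\psi},t)_{_{\scriptstyle {\mathcal S}^p}}\,{\mathrm d}\mu\Big(\frac{\tau t}{u}\Big)
\]
the inner supremum runs over $0\le v\le t$ (not $t/n$), so for $|k|\le n$ one has $|kv|\le nt$ and hence $\varphi^p(kv)\le\varphi_*^p(nt)$, using that $\varphi$ is non-decreasing on $[0,a]$ and $\varphi_*\equiv\varphi(a)$ beyond $a$. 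This yields the bound $\frac{\|T_n\|^p}{|\psi(n)|^p}\int_0^{u}\varphi_*^p(nt)\,{\mathrm d}\mu(\tau t/u)$; the substitution $s=nt$ turns it into $\frac{\|T_n\|^p}{|\psi(n)|^p}\int_0^{nu}\varphi_*^p(s)\,{\mathrm d}\mu(\tau s/(nu))$. Thus the correct choice is $\xi=nu/\tau$, not $\tau/u$: with this $\xi$ the integral is exactly the one on the left of (\ref{A6.113}), and $\Omega(u/\xi)=\Omega(\tau/n)$ is precisely the factor that appears after inserting $\|T_n\|_{_{\scriptstyle {\mathcal S}^p}}\le R_n$. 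With that correction your argument matches the paper's line for line.
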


\begin{proof}  The proof of the theorem basically repeats the proof of Theorem  \ref{Th.6.3.2}.
Based on  inequality (\ref{Jackson_Type_Ineq}), for an arbitrary function
$f\in L^{\psi }(\varphi, \tau, \mu, \Omega)_{_{\scriptstyle  {\mathcal S}^p}}$
   \begin{equation}\label{A6.117} %
            E_n(f)_{_{\scriptstyle  {\mathcal S}^p}} \le
                \bigg(\frac {\mu (\tau ) - \mu(0)}{I_{n,\varphi,p}(\tau ,\mu )}\bigg)^{1/p}
                |\psi (n)| \Omega _\varphi\Big(\frac {\tau}n\Big),
      \end{equation}%
whence, taking into account the definition of the width $\pi _{N}$ and relation (\ref{Best_app_all}), we obtain
\[        
          \pi _{2n-1}(L^{\psi }(\varphi, \mu, \tau, \Omega  )_{_{\scriptstyle  {\mathcal S}^p}} ,{\mathcal S}^p)=
          E_n(L^{\psi }(\varphi, \mu , \tau , \Omega  )_{_{\scriptstyle  {\mathcal S}^p}} )_{_{\scriptstyle  {\mathcal S}^p}}
\]
   \begin{equation}\label{A6.118} 
           =\sup\limits _{f\in L^{\psi }(\varphi, \mu ,\tau, \Omega)_{_{\scriptstyle  {\mathcal S}^p}} }
           E_n(f)_{_{\scriptstyle  {\mathcal S}^p}}
           \le \bigg(\frac {\mu (\tau ) - \mu(0)}{I_{n,\varphi,p}(\tau ,\mu )}\bigg)^{1/p}
                |\psi (n)| \Omega _\varphi\Big(\frac {\tau}n\Big).
      \end{equation}
To obtain the necessary lower estimate, let us show that
    \begin{equation}\label{A6.119} 
    b_{2n}(L^{\psi }(\varphi, \mu , \tau, \Omega  )_{_{\scriptstyle  {\mathcal S}^p}} ,{\mathcal S}^p)\ge
    \bigg(\frac {\mu (\tau ) - \mu(0)}{I_{n,\varphi,p}(\tau ,\mu )}\bigg)^{1/p}
    |\psi (n)| \Omega _\varphi\Big(\frac {\tau}n\Big)=: R_n^*.
      \end{equation}
For this purpose, in the $(2n+1)$-dimensional space ${\mathscr T}_{2n+1}$ of trigonometric polynomials of order $n$, consider
ball $B_{2n+1}$, whose radius is equal to the number $R_n$ defined in (\ref{A6.119}), that is,
   \[ 
   B_{2n+1}^*=\Big\{T_n\in {\mathscr T}_{2n+1} 
  : \|T_n\|_{_{\scriptstyle  {\mathcal S}^p}} \le R_n^*\Big\}
  \]
and prove the validity of the  embedding  $B_{2n+1}^*\subset L^{\psi }(\varphi, \mu , \tau, \Omega  )_{_{\scriptstyle  {\mathcal S}^p}}$.

Assume that $T_n\in B_{2n+1}^*$. Taking into account the non-decrease of the function
$\varphi$ on $[0,a]$  and relations  (\ref{Fourier_Coeff_der}) and (\ref{A6.114}), we have
  \[
  (\mu (\tau ) -\mu (0))\cdot \Omega _\varphi^p  (T_n^{\psi }, \tau,\mu, u )_{_{\scriptstyle  {\mathcal S}^p}}
   =\int\limits _0^{u}\omega _\varphi ^p (T_n^{\psi }, t)_{_{\scriptstyle  {\mathcal S}^p}}
   {\mathrm d} \mu  \Big(\frac {\tau t}{u}\Big)
   \]
  \[
     =\!\!\int\limits _0^{u} \sup\limits _{0\le v\le t}
    \sum\limits  _{|k|\le n} \varphi^p(kv) |\widehat T_n^{\psi }(k)|^p{\mathrm d} \mu  \Big(\frac {\tau t}{u}\Big)\!\!=\!\int\limits _0^{u} \sup\limits _{0\le v\le t}
    \sum\limits  _{|k|\le n} \varphi^p(kv) \Big| \frac{\widehat T_n(k)}{\psi(k)}\Big|^p {\mathrm d} \mu \Big(\frac {\tau t}{u}\Big)
  \]
   \[
   \le \frac 1{|\psi(n)|^{p}} \int\limits _0^{u}
    \sum\limits  _{|k|\le n} \varphi^p_*(n t) |\widehat T_n(k)|^p {\mathrm d} \mu  \Big(\frac {\tau t}{u}\Big)\le \frac {\|T_n\|_{_{\scriptstyle  {\mathcal S}^p}} ^p }
    {|\psi(n)|^{p}}
    \int\limits _0^{u} \varphi^p_*(n t) {\mathrm d} \mu  \Big(\frac {\tau t}{u}\Big).
   \]
   \[
   \le \frac {\|T_n\|_{_{\scriptstyle  {\mathcal S}^p}} ^p }
    {|\psi(n)|^{p}}
    \int\limits _0^{nu} \varphi^p_*(t) {\mathrm d} \mu  \Big(\frac {\tau t}{nu}\Big).
   \]
From the inclusion of  $T_n\in B_{2n+1}^*$ and  relation (\ref{A6.113}) with $\xi =\frac {nu}{\tau} $, it follows  that
   \[
           \Omega _\varphi  (T_n^{\psi }, \tau,\mu, u )_{_{\scriptstyle  {\mathcal S}^p}}\le
           \bigg( \frac {\int _0^{nu} \varphi^p_*(t) {\mathrm d} \mu  (\frac {\tau t}{n u})}
           {\int_0^{\tau }\varphi^p(t) {\mathrm d} \mu   (t)}\bigg)^{1/p}
           \Omega _\varphi\Big(\frac {\tau}n\Big)\le \Omega _\varphi(u).
    \] 
Therefore, indeed  $B_{2n+1}^*\subset L^{\psi }(\varphi, \tau,\mu , \Omega  )_{_{\scriptstyle  {\mathcal S}^p}} $
and by definition of Bernstein width, relation (\ref{A6.119}) is true. Combining relations
(\ref{A6.110}), (\ref{A6.117}) and (\ref{A6.119}), and taking into account
monotonic non-increase of each of the widths $b_N,$  $d_N,$ $\lambda _N$ and $\pi _N$ on $N$, we get
(\ref{A6.115}).  Under the additional condition (\ref{I_n,varphi,p_Equiv_n}), the upper and lower estimates of the quantities
$ P_N(L^{\psi }(\varphi,  \tau, \mu, \Omega  )_{_{\scriptstyle  {\mathcal S}^p}} , {\mathcal S}^p)$ coinside in relation (\ref{A6.115}) and hence, equalities (\ref{A6.116}) are true.

\end{proof}

In the case $ \varphi(t)=\varphi_\alpha(t)=
2^\frac \alpha 2 (1-\cos{t})^\frac \alpha2
$, the following statement is true:


\begin{corollary} \label{Th.6.3.3}
                  Let $1\le p<\infty$, $\psi \in \Psi$, $\tau \in (0, \pi],$ $\alpha>0$ and  $\mu \in M(\tau )$.
                  Let also for all  $\xi>0$ and $0<u\le \pi $, the function $\Omega$  satisfies the condition
                  $$
                  \Omega  \Big(\frac u{\xi }\Big)
                  \bigg(\int\limits _0^{\xi \tau}(1 - \cos t)_{*}^{\frac {\alpha p }2}{\mathrm d} \mu
                  \Big(\frac {t}{\xi}\Big)\bigg)^{1/p}
                  \le
                  \Omega  (u)\bigg(\int \limits _0^{\tau }(1-\cos t)^{\frac {\alpha p }2}{\mathrm d} \mu  (t)\bigg)^{1/p},
                  \eqno (\ref{A6.113}')
                  $$
where
                  $$
                  (1-\cos t)_{*}:=\left \{\begin{matrix} 1-\cos t,\quad \hfill & 0\le t\le \pi, \\
                  2,\quad \hfill &  t\ge \pi.\end{matrix}\right.
                  \eqno (\ref{A6.114}')
                  $$
 Then for any  $n\in {\mathbb N}$ and  $N\in \{2n-1,  2n\}$  the following inequalities are true:
                  $$
                   \bigg(\frac {\mu(\tau ) - \mu (0)}
                   {2^{\alpha p}\int_0^{\tau }\sin^{\alpha p} \frac t2 {\mathrm d} \mu (t)}\bigg)^{1/p}
                   |\psi (n)| \Omega  \Big(\frac {\tau }n\Big)\le
                   P_N(L^{\psi}(\alpha, \tau, \mu, \Omega)_{_{\scriptstyle  {\mathcal S}^p}} , {\mathcal S}^p)
                   $$
                   $$
                   \le \bigg(\frac {\mu (\tau ) - \mu (0)}
                   {I_{n,\alpha,p}(\tau ,\mu )}\bigg)^{1/p}
                   |\psi (n)| \Omega  \Big(\frac {\tau }n\Big),
                   $$
where the quantity $I_{n,\alpha,p}(\tau ,\mu )$ is defined by $(\ref{I_n,varphi,p})$ with
$ \varphi(t)=2^\frac \alpha 2 (1-\cos{kh})^\frac \alpha2$, and $P_N$
is any of the widths $b_N$, $d_N$, $\lambda _N$ or $ \pi _N$.
If, in addition, condition $(\ref{I_n,varphi,p_Equiv_n}')$ holds, then
                 $$
                 P_{N}(L^{\psi }(\alpha, \tau, \mu,  \Omega  )_{_{\scriptstyle  {\mathcal S}^p}} , {\mathcal S}^p)=
                 \bigg(\frac {\mu(\tau ) - \mu (0)}
                 {2^{\alpha p}\int_0^{\tau }\sin^{\alpha p} \frac t2 {\mathrm d} \mu (t)}\bigg)^{1/p}
                 |\psi (n)| \Omega  \Big(\frac {\tau }n\Big).
                 $$

\end{corollary}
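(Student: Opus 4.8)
The plan is to obtain this statement directly from Theorem~\ref{Th.6.3.3} by taking $\varphi=\varphi_\alpha$ with $\varphi_\alpha(t)=2^{\alpha/2}(1-\cos t)^{\alpha/2}$ and checking that all of its hypotheses are satisfied with $a=\pi$; the parameters $p$, $\psi\in\Psi$, $\tau$, $\mu\in M(\tau)$ and the conditions on $\Omega$ will be seen to carry over verbatim once $\varphi_\alpha$ is substituted.

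First I would record the elementary identity $\varphi_\alpha(t)=2^{\alpha/2}(1-\cos t)^{\alpha/2}=2^{\alpha}\,|\sin(t/2)|^{\alpha}$, which follows from $1-\cos t=2\sin^2(t/2)$. From it one reads off at once that $\varphi_\alpha\in\Phi$ for every $\alpha>0$ (it is continuous, bounded, non-negative, even, vanishes at the origin, and its zero set $\{t:\sin(t/2)=0\}$ is countable, hence Lebesgue-null), that $\varphi_\alpha$ is non-decreasing on $[0,\pi]$ because $t\mapsto\sin(t/2)$ is, and that $\varphi_\alpha(\pi)=2^{\alpha}=\sup\{\varphi_\alpha(t):t\in\mathbb R\}$. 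Hence Theorem~\ref{Th.6.3.3} is applicable with $a=\pi$, and for $\tau\in(0,\pi]$ its truncated majorant $\varphi_*$ defined by (\ref{A6.114}) becomes $\varphi_{\alpha,*}(t)=2^{\alpha/2}(1-\cos t)_*^{\alpha/2}$ with $(1-\cos t)_*$ given by $(\ref{A6.114}')$.

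Next I would translate the three conditions. Raising $\varphi_\alpha$ and $\varphi_{\alpha,*}$ to the $p$-th power introduces the common factor $2^{\alpha p/2}$ inside both integrals appearing in (\ref{A6.113}); after taking $p$-th roots this factor cancels from the two sides, so (\ref{A6.113}) becomes exactly $(\ref{A6.113}')$. In the same way $I_{n,\varphi_\alpha,p}(\tau,\mu)=\int_0^\tau\varphi_\alpha^p(t)\,\mathrm d\mu(t)=2^{\alpha p/2}\int_0^\tau(1-\cos t)^{\alpha p/2}\,\mathrm d\mu(t)$, so condition (\ref{I_n,varphi,p_Equiv_n}) for $\varphi_\alpha$ is precisely $(\ref{I_n,varphi,p_Equiv_n}')$. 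Finally, applying $1-\cos t=2\sin^2(t/2)$ once more gives $\int_0^\tau\varphi_\alpha^p(t)\,\mathrm d\mu(t)=2^{\alpha p/2}\int_0^\tau\big(2\sin^2(t/2)\big)^{\alpha p/2}\,\mathrm d\mu(t)=2^{\alpha p}\int_0^\tau\sin^{\alpha p}(t/2)\,\mathrm d\mu(t)$, which is the normalizing constant displayed in the corollary.

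Substituting these identifications into the conclusions (\ref{A6.115}) and (\ref{A6.116}) of Theorem~\ref{Th.6.3.3} produces the two-sided estimate and, under $(\ref{I_n,varphi,p_Equiv_n}')$, the exact value asserted in the corollary. I do not expect a genuine obstacle here: the only points requiring attention are the verification that $\pi$ is simultaneously the right endpoint of an interval of monotonicity of $\varphi_\alpha$ and a point at which $\varphi_\alpha$ attains its global supremum over $\mathbb R$ (so that $a=\pi$ is an admissible choice in Theorem~\ref{Th.6.3.3}), and the careful bookkeeping of the powers of $2$ when rewriting $(1-\cos t)^{\alpha p/2}$ as $2^{\alpha p/2}\sin^{\alpha p}(t/2)$.
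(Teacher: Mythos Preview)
Your proposal is correct and follows exactly the paper's approach: the corollary is stated immediately after Theorem~\ref{Th.6.3.3} as the specialization $\varphi=\varphi_\alpha$ with no separate proof, so your verification that $\varphi_\alpha\in\Phi$ is non-decreasing on $[0,\pi]$ with $\varphi_\alpha(\pi)=\sup_{t}\varphi_\alpha(t)$ (hence $a=\pi$), together with the rewriting of the integrals via $1-\cos t=2\sin^2(t/2)$, is precisely the intended derivation.
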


Note that for specific weighted functions $\mu  \in M(\tau )$ and some restrictions on other parameters,
the question of the existence of functions $\Omega$ satisfying  conditions of the form (\ref{A6.113}) and $(\ref{A6.113}')$, investigated in  \cite{Taikov_1976}, \cite{Taikov_1979},
\cite{Aynulloyev_1984}, \cite{Yussef_1990}, etc.

For the weight functions  $\mu _1(t)= 1 - \cos t$ and $\mu _2(t)=t$, Corollary \ref{Th.6.3.3} yields the following statements:

\begin{corollary} \label{Cor.6.3.5}
                  Let $1\le p<\infty$, $\psi \in \Psi$, $\mu _1(t)=1-\cos t$ and for all
                  $\xi >0$ and $0<u\le \pi $, the function  $\Omega  $ satisfies the condition
     \begin{equation}\label{A6.121} 
           \Omega  \Big(\frac u{\xi}\Big)
           \bigg (\frac 1{\xi }\int \limits _0^{\pi \xi}(1 - \cos t)_{*}^{\frac {\alpha p }2}\sin \frac t{\xi }dt\bigg)^{1/p}
          \!\!\! \le \Omega  (u)\bigg
           (\int \limits _0^{\pi}(1-\cos t)^{\frac {\alpha p }2}\sin tdt \bigg )^{1/p}\!\!,
     \end{equation} 
where the fucntion  $(1-\cos t)_{*}$ is given by  $(\ref{A6.114}')$. Then for any $n\in {\mathbb N}$ and  $N\in \{2n-1,  2n\}$
 \[
              \frac {(\frac {\alpha p }2+1)^{1/p}}{2^\alpha }|\psi(n)|
              \Omega \Big(\frac {\tau }n\Big)
              \le P_N(L^{\psi }(\alpha,  \pi, \mu _1, \Omega)_{_{\scriptstyle  {\mathcal S}^p}} , {\mathcal S}^p)\le
              \frac {2^{1/p} |\psi(n)|}
              {I^{1/p}_{n,\alpha,p}
               (\pi , \mu _1)}\Omega \Big(\frac {\tau }n\Big),
  \]
where $I_{n,\alpha,p}(\pi ,\mu_1)$  is the quantity of the form $(\ref{A6.99})$, and $P_N$
is any of the widths $b_N$, $d_N$, $\lambda _N$ or $ \pi _N$. If, in addition,  $\frac {\alpha p }2\in {{\mathbb N}},$, then
      \[ 
            P_{N}(L^{\psi }(\alpha,  \pi, \mu_1, \Omega )_{_{\scriptstyle  {\mathcal S}^p}} ,{\mathcal S}^p)=
            \frac {(\frac {\alpha p }2+1)^{1/p}}{2^\alpha }|\psi(n)|
            \Omega \Big(\frac {\tau }n\Big).
      \]  

\end{corollary}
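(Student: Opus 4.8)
The plan is to derive Corollary~\ref{Cor.6.3.5} directly from Corollary~\ref{Th.6.3.3} by substituting the weight function $\mu_1(t) = 1-\cos t$ and computing the relevant integrals explicitly. First I would note that for $\mu_1$ we have $\mathrm{d}\mu_1(t) = \sin t\,\mathrm{d}t$ and $\mu_1(\pi) - \mu_1(0) = 2$, so that with $\tau = \pi$ the general condition $(\ref{A6.113}')$ of Corollary~\ref{Th.6.3.3} becomes precisely condition $(\ref{A6.121})$ once one writes out $\mathrm{d}\mu_1(t/\xi) = \frac{1}{\xi}\sin(t/\xi)\,\mathrm{d}t$. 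Here one must be slightly careful: the substitution in the scaled integral $\int_0^{\xi\pi}(1-\cos t)_*^{\alpha p/2}\,\mathrm{d}\mu_1(t/\xi)$ produces the factor $\frac{1}{\xi}\sin\frac{t}{\xi}$, which matches the left-hand side of $(\ref{A6.121})$. So the hypotheses of Corollary~\ref{Th.6.3.3} with $\mu = \mu_1$ and $\tau = \pi$ are exactly the hypotheses stated here.

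Next I would invoke the conclusion of Corollary~\ref{Th.6.3.3}: for any $n\in{\mathbb N}$ and $N\in\{2n-1,2n\}$,
\[
\Big(\frac{\mu_1(\pi)-\mu_1(0)}{2^{\alpha p}\int_0^\pi \sin^{\alpha p}\frac t2\,\mathrm{d}\mu_1(t)}\Big)^{1/p}|\psi(n)|\,\Omega\Big(\frac{\tau}{n}\Big)\le P_N \le \Big(\frac{\mu_1(\pi)-\mu_1(0)}{I_{n,\alpha,p}(\pi,\mu_1)}\Big)^{1/p}|\psi(n)|\,\Omega\Big(\frac{\tau}{n}\Big).
\]
The upper bound is immediate: $\mu_1(\pi)-\mu_1(0) = 2$, so the upper constant is $(2/I_{n,\alpha,p}(\pi,\mu_1))^{1/p}$, which is exactly the stated upper bound with $I_{n,\alpha,p}(\pi,\mu_1)$ the quantity of the form $(\ref{A6.99})$. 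For the lower bound I would compute $\int_0^\pi \sin^{\alpha p}\frac t2\,\mathrm{d}\mu_1(t) = \int_0^\pi \sin^{\alpha p}\frac t2 \sin t\,\mathrm{d}t$; using $\sin t = 2\sin\frac t2\cos\frac t2$ and the substitution $s=\sin\frac t2$ (so $\mathrm{d}s = \frac12\cos\frac t2\,\mathrm{d}t$) gives $4\int_0^1 s^{\alpha p+1}\,\mathrm{d}s = \frac{4}{\alpha p+2}$. Hence $2^{\alpha p}\int_0^\pi\sin^{\alpha p}\frac t2\,\mathrm{d}\mu_1(t) = \frac{2^{\alpha p+2}}{\alpha p + 2}$, and the lower constant becomes $(2\cdot\frac{\alpha p+2}{2^{\alpha p+2}})^{1/p}|\psi(n)| = (\frac{\alpha p/2+1}{2^{\alpha p}})^{1/p}|\psi(n)| = \frac{(\alpha p/2+1)^{1/p}}{2^\alpha}|\psi(n)|$, matching the claimed lower bound.

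Finally, under the extra hypothesis $\frac{\alpha p}{2}\in{\mathbb N}$, I would apply $(\ref{I_n,varphi,p_Equiv_n}')$: by the computation already carried out in the proof of Corollary~\ref{Cor.6.3.1} (specifically formula $(\ref{A6.102})$ via $(\ref{A6.101})$), when $\lambda = \frac{\alpha p}{2}$ is a positive integer the infimum over $k\ge n$ defining $I_{n,\alpha,p}(\pi,\mu_1)$ is attained and equals $2^{\alpha p/2}\int_0^\pi(1-\cos t)^{\alpha p/2}\sin t\,\mathrm{d}t = 2^{\alpha p/2}\cdot\frac{2^{\alpha p/2+1}}{\alpha p/2+1} = \frac{2^{\alpha p+1}}{\alpha p/2+1}$. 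Then $2/I_{n,\alpha,p}(\pi,\mu_1) = \frac{2(\alpha p/2+1)}{2^{\alpha p+1}} = \frac{\alpha p/2+1}{2^{\alpha p}}$, so the upper and lower constants coincide, both equal to $\frac{(\alpha p/2+1)^{1/p}}{2^\alpha}$, yielding the exact value of $P_N$. The only mildly delicate point is the change-of-variables bookkeeping in verifying that $(\ref{A6.121})$ really is the specialization of $(\ref{A6.113}')$; everything else is a routine integral evaluation reusing $(\ref{A6.101})$.
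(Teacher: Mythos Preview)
Your proposal is correct and follows exactly the approach the paper intends: the paper presents Corollary~\ref{Cor.6.3.5} as the specialization of Corollary~\ref{Th.6.3.3} to $\mu_1(t)=1-\cos t$ and $\tau=\pi$, and you carry out precisely that specialization, including the integral evaluation for the lower constant and the appeal to $(\ref{A6.101})$--$(\ref{A6.102})$ to collapse the bounds when $\frac{\alpha p}{2}\in\mathbb N$.
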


In the case where  $p=2$, $\psi(k)=({\mathrm i}k)^{-r},$ $r\in {{\mathbb N}},$ and $\alpha=1$, the statement  of Corollary
 \ref{Cor.6.3.5} was obtained by Aynulloyev  \cite{Aynulloyev_1984}. In \cite{Aynulloyev_1984}, the existence
 of functions $\Omega$ satisfying condition (\ref{A6.121})  under the above restrictions on the parameters $p$ and $\alpha$
 was also proved.

\begin{corollary} \label{Cor.6.3.6}
                  Let $1\le p<\infty$, $\psi \in \Psi$, $0<\tau \le \frac {3\pi }4,$ $\mu_2 = t$
                 and for all $\xi >0$ and $0<u\le \pi $, the function  $\Omega  $ satisfies the condition
      \begin{equation}\label{A6.123} 
              \Omega  \Big(\frac {u}{\xi }\Big)
              \bigg (\frac 1{\xi}\int \limits _0^{\xi \tau }(1 - \cos t)_{*}^{\frac {\alpha p }2}dt\bigg )^{1/p}
              \le \Omega  (u)
              \bigg (\int \limits _0^{\tau }(1- \cos t)^{\frac {\alpha p }2}dt \bigg )^{1/p}.
      \end{equation} 
Then for any $n\in {\mathbb N}$ and  $N\in \{2n-1,  2n\}$
 \[
            P_{N}(L^{\psi }(\alpha, \tau, \mu_2, \Omega )_{_{\scriptstyle  {\mathcal S}^p}} ,{\mathcal S}^p)=
            \bigg(\frac {\tau}{2^{\alpha p}\int _0^{\tau }\sin ^{\alpha p }\frac t{2}{\mathrm d}t}\bigg)^{1/p}
            |\psi (n)|\Omega  \Big(\frac {\tau }n\Big),
 \]
where $P_N$ is any of the widths $b_N$, $d_N$, $\lambda _N$ or $ \pi _N$.

\end{corollary}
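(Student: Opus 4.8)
The plan is to obtain this statement as a direct specialization of the $\varphi_\alpha$-version of the width theorem, namely Corollary~\ref{Th.6.3.3}, to the weight $\mu_2(t)=t$. First I would record the elementary structural facts: $\mu_2\in M(\tau)$ for every $\tau>0$; the function $\varphi_\alpha(t)=2^{\alpha/2}(1-\cos t)^{\alpha/2}$ is non-decreasing on $[0,\pi]$ with $\sup_{t\in{\mathbb R}}\varphi_\alpha(t)=2^\alpha=\varphi_\alpha(\pi)$, so one may take $a=\pi$ in Corollary~\ref{Th.6.3.3}, and the hypothesis $0<\tau\le\frac{3\pi}4$ gives $\tau\in(0,a]$. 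Moreover, since $\mu_2(t/\xi)=t/\xi$ yields ${\mathrm d}\mu_2(t/\xi)=\xi^{-1}{\mathrm d}t$, condition $(\ref{A6.113}')$ specialized to $\mu=\mu_2$ is \emph{literally} the hypothesis $(\ref{A6.123})$ assumed here (the $2^{\alpha p/2}$ factors attached to $\varphi_{\alpha,*}^p$ cancel from both sides). Thus the structural assumption on $\Omega$ required by Corollary~\ref{Th.6.3.3} holds automatically.

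The one genuinely substantive point is to verify condition $(\ref{I_n,varphi,p_Equiv_n}')$ for the weight $\mu_2$, and it is here that the restrictions $0<\tau\le\frac{3\pi}4$ and $\alpha p\ge1$ enter. I would reuse the infimum identity established in \cite{Voicexivskij_2002} and already invoked in the proof of Corollary~\ref{Cor.6.3.2.}: for $\tau\in(0,\frac{3\pi}4]$ and exponent $\alpha p\ge1$,
\[
\mathop{\inf\limits_{k\ge n}}\limits_{k\in{\mathbb N}}\int\limits_0^{\tau}\Big|\sin\frac{kt}{2n}\Big|^{\alpha p}\,{\mathrm d}t=\int\limits_0^{\tau}\sin^{\alpha p}\frac t2\,{\mathrm d}t .
\]
Combining this with the identity $2^{\alpha p/2}(1-\cos s)^{\alpha p/2}=2^{\alpha p}\bigl|\sin(s/2)\bigr|^{\alpha p}$, one computes
\[
I_{n,\alpha,p}(\tau,\mu_2)=2^{\frac{\alpha p}2}\mathop{\inf\limits_{k\ge n}}\limits_{k\in{\mathbb N}}\int\limits_0^{\tau}\Big(1-\cos\frac{kt}n\Big)^{\frac{\alpha p}2}{\mathrm d}t=2^{\alpha p}\int\limits_0^{\tau}\Big|\sin\frac t2\Big|^{\alpha p}{\mathrm d}t=2^{\frac{\alpha p}2}\int\limits_0^{\tau}(1-\cos t)^{\frac{\alpha p}2}{\mathrm d}t ,
\]
which is exactly $(\ref{I_n,varphi,p_Equiv_n}')$ with $\mu=\mu_2$.

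With both hypotheses of Corollary~\ref{Th.6.3.3} verified, I would finish by quoting its exact-value conclusion: for every $n\in{\mathbb N}$ and $N\in\{2n-1,2n\}$,
\[
P_N\bigl(L^{\psi}(\alpha,\tau,\mu_2,\Omega)_{_{\scriptstyle{\mathcal S}^p}},{\mathcal S}^p\bigr)=\bigg(\frac{\mu_2(\tau)-\mu_2(0)}{2^{\alpha p}\int_0^{\tau}\sin^{\alpha p}\frac t2\,{\mathrm d}\mu_2(t)}\bigg)^{1/p}|\psi(n)|\,\Omega\Big(\frac{\tau}n\Big),
\]
and then substituting $\mu_2(\tau)-\mu_2(0)=\tau$ and ${\mathrm d}\mu_2(t)={\mathrm d}t$ yields the asserted formula. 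The whole difficulty is thus concentrated in the verification of $(\ref{I_n,varphi,p_Equiv_n}')$; since the required infimum identity is valid for every real exponent $\alpha p\ge1$, no integrality assumption on $\alpha$ is needed here, in contrast with the $\mu_1$-case treated in Corollary~\ref{Cor.6.3.5}. Everything else reduces to routine substitution, so I expect no further obstacles.
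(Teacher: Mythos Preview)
Your approach is correct and matches the paper's own route: the paper states Corollary~\ref{Cor.6.3.6} as an immediate specialization of Corollary~\ref{Th.6.3.3} to $\mu_2(t)=t$, and the only nontrivial ingredient is exactly the verification of $(\ref{I_n,varphi,p_Equiv_n}')$ via the Voicexivskij infimum identity, which you handle just as in the proof of Corollary~\ref{Cor.6.3.2.}. You are also right to flag that the hypothesis $\alpha p\ge 1$ is needed for that identity (it appears in the parallel Corollary~\ref{Cor_6.3.4.} but is silently omitted from the statement of Corollary~\ref{Cor.6.3.6}); this is a genuine omission in the paper rather than a gap in your argument.
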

Note that the statements of Corollaries  \ref{Th.6.3.1} (case $\psi \in \Psi$),  \ref{Th.6.3.2}, \ref{Th.6.3.3} and   was proved by Serdyuk \cite{Serdyuk_2003}.

In the case when   $p=2,$ $\psi (k)=({\mathrm i}k)^{-r}$ and  $r\ge 0,$ $\alpha=1$ or
$r\ge 1/2,$ $\alpha\in {{\mathbb N}},$ the statement  of Corollary  \ref{Cor.6.3.6} follows from results of  the papers \cite{Taikov_1976}, \cite{Taikov_1979} (see also \cite[Ch. 4]{Pinkus_1985}), where the existence
 of functions $\Omega$ satisfying (\ref{A6.123}) with the corresponding restrictions on
 $p,$ $\alpha$ and $r$  was also proved.

The question of establishing Jackson-type inequalities in the spaces ${\mathcal S}^p$, as well as finding exact values of the widths of classes generated by averaged values of moduli of smoothness of a form similar to (\ref{Mean_Value_Gen_Modulus}), was considered in \cite{Vakarchuk_2004}, \cite{Vakarchuk_Shchitov_2006},  \cite{Voicexivskij_2002}, etc.




%
%




AUTHORS

\medskip
\noindent Fahreddin Abdullayev\\
                   Faculty of Sciences, Kyrgyz-Turkish Manas University,\\
                   56, Chyngyz Aitmatov avenue, Bishkek, Kyrgyz republic, 720044;\\
                   Faculty of Science and Letters, Mersin University,\\
                   \c{C}iftlikk\"{o}y Kamp\"{u}s\"{u}, Yeni\c{s}ehir, Mersin, Turkey, 33342 \\
E-mail: fahreddin.abdullayev@manas.edu.kg, fahreddinabdullayev@gmail.com\\

\medskip
\noindent Anatolii Serdyuk \\
Department of Theory of Functions\\
Institute of Mathematics of \\
the National Academy of Sciences of Ukraine\\
Tereschenkivska st., 3, 01024 Kyiv, Ukraine \\
E-mail: sanatolii@ukr.net, serdyuk@imath.kiev.ua\\

\medskip
\noindent Andrii Shidlich \\
Department of Theory of Functions\\
Institute of Mathematics of \\
the National Academy of Sciences of Ukraine\\
Tereschenkivska st., 3, 01024 Kyiv, Ukraine \\
E-mail: shidlich@gmail.com, shidlich@imath.kiev.ua

\end{document}